\documentclass[final]{siamart220329}

\usepackage{lipsum}
\usepackage{graphicx}
\usepackage{subcaption}
\usepackage{epstopdf}
\usepackage{algorithmic}
\usepackage{amsfonts}              
\usepackage{amsmath}
\usepackage{amssymb}
\usepackage{blkarray}
\usepackage{enumitem}
\usepackage[normalem]{ulem}

\ifpdf
  \DeclareGraphicsExtensions{.eps,.pdf,.png,.jpg}
\else
  \DeclareGraphicsExtensions{.eps}
\fi


\newcommand{\TheTitle}{On the Two-parameter Matrix pencil Problem} 
\newcommand{\TheAuthors}{S. K. Gungah, F. F. Alsubaie, and I. M. Jaimoukha}

\headers{\TheTitle}{\TheAuthors}

\title{{\TheTitle}\thanks{Final version submitted to the editors April 9, 2024.}}

\author{
  Satin K. Gungah\thanks{Control and Power Group, Department of Electrical and Electronic Engineering, Imperial College, London SW7 2AZ, UK (\email{s.gungah@imperial.ac.uk}).}
  \and
  Fawwaz F. Alsubaie\thanks{The author carried out the work while he had been affiliated with the Control and Power Group, Department of Electrical and Electronic Engineering, Imperial College London SW7 2BT, UK (\email{f.alsubaie15@alumni.imperial.ac.uk}).}
  \and
  Imad M. Jaimoukha \thanks{Control and Power Group, Department of Electrical and Electronic Engineering, Imperial College, London SW7 2AZ, UK (\email{i.jaimouka@imperial.ac.uk}).}
}


\newcommand{\be}{\begin{equation}}
\newcommand{\ee}{\end{equation}}
\newcommand{\bea}{\begin{eqnarray}}
\newcommand{\eea}{\end{eqnarray}}
\newcommand{\bean}{\begin{eqnarray*}}
\newcommand{\eean}{\end{eqnarray*}}

\newtheorem{remark}{Remark}[section]

\def\l{{\lambda}}
\def\a{{\alpha}}
\newcommand{\K}{\mathcal{K}}
\newcommand{\A}{{A}}

\newcommand{\D}{{D}}
\newcommand{\E}{{A}}
\newcommand{\F}{{F}}
\newcommand{\I}{{I}}

\newcommand{\T}{{T}}
\newcommand{\U}{\mathcal{U}}
\newcommand{\V}{\mathcal{V}}

\newcommand{\Y}{\mathcal{Y}}

\renewcommand{\F}{\mathcal{F}}
\renewcommand{\S}{\mathcal{S}}
\renewcommand{\T}{\mathcal{T}}
\newcommand{\cH}{\mathcal{H}}
\newcommand{\tm}{\tilde{m}}
\newcommand{\tn}{\tilde{n}}

\renewcommand{\D}{\Delta}
\newcommand{\Dt}{\tilde{\Delta}}
\newcommand{\G}{\Gamma}

\renewcommand{\o}{{0}}

\renewcommand{\u}{{u}}

\newcommand{\x}{{x}}
\newcommand{\y}{{y}}

\renewcommand{\vec}{{\rm vec}}


\newtheorem{thm}{Theorem}[section]

\newtheorem{prob}[thm]{Problem}



\DeclareMathOperator{\vect}{vec}

\DeclareMathOperator{\rrank}{rank}

\newcommand{\RR}{\mathbb{R}}      
\newcommand{\CC}{\mathbb{C}}      

\newcommand{\textmultiset}[2]{\bigl(\!{\binom{#1}{#2}}\!\bigr)}
\newcommand{\displaymultiset}[2]{\left(\!{\binom{#1}{#2}}\!\right)}
\newcommand\multiset[2]{\mathchoice{\displaymultiset{#1}{#2}}
                                {\textmultiset{#1}{#2}}
                                {\textmultiset{#1}{#2}}
                                {\textmultiset{#1}{#2}}}



\newcommand{\matOneTwo}[2]{\left[\begin{array}{cc} #1 & #2 \end{array} \right]}
\newcommand{\matTwoOne}[2]{\left[\begin{array}{c} #1 \\ #2 \end{array} \right]}
\newcommand{\matTwoTwo}[4]{\left[\begin{array}{cc} #1 & #2 \\ #3 & #4 \end{array} \right]}

\newcommand{\matOneThr}[3]{\left[\begin{array}{ccc} #1 & #2 & #3 \end{array} \right]}
\newcommand{\matThrOne}[3]{\left[\begin{array}{c} #1 \\ #2 \\ #3 \end{array} \right]}

\newcommand{\matFourOne}[4]{\left[
\begin{array}{c}
#1 \\
#2 \\
#3 \\
#4
\end{array}
\right]}

\newcommand{\matThrThr}[9]{\left[
\begin{array}{ccc}
#1 & #2 & #3 \\
#4 & #5 & #6 \\
#7 & #8 & #9
\end{array}
\right]}



\ifpdf
\hypersetup{
  pdftitle={\TheTitle},
  pdfauthor={\TheAuthors}
}
\fi


\begin{document}
\maketitle

\begin{abstract}
The multiparameter matrix pencil problem (MPP) is a generalization of the one-parameter MPP: given a set of  $m\times n$ complex matrices $A_0,\ldots, A_r$,  with $m\ge n+r-1$, it is required to find all complex scalars $\l_0,\ldots,\l_r$, not all zero,  such that the matrix pencil $A(\l)=\sum_{i=0}^r\l_iA_i$ loses column rank and the corresponding nonzero complex vector $x$ such that $A(\lambda)x=0$.
This problem is related to the well-known multiparameter eigenvalue problem except that there is only one pencil and, crucially, the matrices are not necessarily square.
 In this paper, we give a full solution to the two-parameter MPP.
 Firstly, an inflation process is implemented to show that the two-parameter MPP is equivalent to a set of three $m^2\times n^2$ simultaneous one-parameter MPPs. These problems are given in terms of Kronecker commutator operators (involving the original matrices) which exhibit several symmetries. These symmetries are analysed and are then used to deflate the dimensions of the one-parameter MPPs to $\frac{m(m-1)}{2}\times\frac{n(n+1)}{2}$ thus simplifying their numerical solution. In the case that $m=n+1$ it is shown that the two-parameter MPP has at least one solution and generically $\frac{n(n+1)}{2}$ solutions and furthermore that, under a rank assumption, the Kronecker determinant operators satisfy a commutativity property. This is then used to show that the two-parameter MPP is equivalent to a set of three simultaneous eigenvalue problems.
 A general solution algorithm is presented and numerical examples are given to outline the procedure of the proposed algorithm.   
\end{abstract}

\begin{keywords}
multiparameter matrix pencil problem, two-parameter matrix pencil problem, multiparameter eigenvalue problem, Kronecker product, Kronecker commutator operator, Kronecker determinant, Kronecker canonical form.  
\end{keywords}

\begin{AMS}
15A22, 47A56, 47A80, 47A25, 15A69, 65F15, 15A18, 47B47
\end{AMS}

\section{Notation}\label{sec:notation}
Upper case letters denote matrices while lower case letters denote vectors or scalars.  $\RR$ and $\CC$ denote the set of real and complex numbers, $\RR^n$ and $\CC^n$ the sets of real and complex $n$-dimensional column vectors and $\RR^{m\times n}$ and $\CC^{m\times n}$ the sets of real and complex $m\times n$ matrices, respectively. For $A\in\CC^{m\times n}$, $A^{T}$ and $\rrank(A)$ denote transpose and rank, respectively. ${\I_{r}}$ and $0_{r\times s}$ denote the $r\times r$ identity matrix and the $r\times s$ zero matrix, respectively, with the subscripts dropped when they can be inferred from the context. The vector operator and Kronecker product are denoted as $\vect{(\cdot)}$ and $\otimes$, respectively, where we take $\vec{(A)}$ to be the vector of the stacked columns of $A$. For $x\!\in\!\CC^n$ we define $x^{\otimes2}\!=\!x\otimes x$ and for all $i\!\ge\!2$, $x^{\otimes(i+1)}\!=\!x\otimes x^{\otimes i}$. A vector $z\!\in\!\CC^{n^r}$ is called strongly decomposable if $z\!=\!x^{\otimes r}$ for some $x\!\in\!\CC^{n}$.

\section{Introduction}
The multiparameter matrix pencil problem (MPP) of the form
\be\label{eqn:Pencil}
\left(\sum_{i=0}^r\l_i A_i\right)x=0,\quad 0\ne\l=\left[\begin{array}{c}\l_0\\\vdots\\\l_r\end{array}\right]\in\CC^{r+1},\quad 0\ne x\in\CC^n.
\ee
where $A_{i}\in\CC^{m\times n}$, $i=0,\ldots,r$, are given (not necessarily square) matrices has received increasing attention in recent years. In \cite{blum1978c,blum1978}, a technique based on gradient method is presented to solve a given multiparameter MPP, where local convergence is proved under certain conditions. Numerical examples are given to illustrate the proposed algorithm.
Another gradient based algorithm was proposed in \cite{wicks1995}, where the multiparameter MPP is posed as a structured matrix perturbations that cause a specified system matrix to fail to have full column rank. In \cite{khazanov1997}, the notion of the generalized Jordan form and generating vector for the multiparameter MPP were considered. Various numerical methods based on iterative algorithms have been discussed in \cite{Khazanov2005}. In \cite{Khazanov2007}, a linearization process is performed on a given multiparameter polynomial matrix to yield a multiparameter MPP with linear eigenvalues. The spectral properties between the multiparameter polynomial MPP and the linear MPP is studied. An iterative algorithm with minimal residual  quotient and Newton shift is outlined to the MPP in-hand. The problem was also considered in \cite{shapiro2009} where under certain assumptions on the pencil in \eqref{eqn:Pencil}, they derive results concerning the number of solutions in the generic case. In \cite{Ahmad2010} the first order $\mathcal{H}_2$ optimal model reduction problem was shown to be equivalent to a one-parameter MPP. In \cite{Ahmad2011} the second order $\mathcal{H}_2$ optimal model reduction problem was shown to be equivalent to a two-parameter MPP, but no dedicated algorithms were given for its solution. In \cite{Alsubaie2019} it was shown that the $r$-th order $\mathcal{H}_2$ optimal model reduction problem is equivalent to an $r$-parameter MPP. Furthermore, a general approach was outlined for the numerical solution of the $r$-parameter MPP, subject to some regularity assumptions. In \cite{demoor2019} it is shown that the low order least squares optimal realization of linear time-invariant dynamical systems from
given data can be posed as a multiparameter MPP. Similarly, it is shown in \cite{vermeersch2019} that globally optimal least-squares
identification of autoregressive moving-average
models is equivalent to a multiparameter MPP. Both \cite{demoor2019} and \cite{vermeersch2019} then propose solving this problem via the block Macaulay method. In \cite{decock2021} several eigenvalue problems, including the multiparameter MPP, were considered. They use shift-invariant subspaces and multi-dimensional realization algorithms to provide a unified framework for solving these problems. In \cite{vermeersch2022}, two algorithms, based on the block Macaulay matrix approach \cite{demoor2019,decock2021}, were proposed to transform the multiparameter MPP into standard eigenvalue problems. The work in \cite{hochstenbach2024} uses a randomized sketching approach and exploits the deflation technique in \cite{Alsubaie2019} to transform the multiparameter MPP to square multiparameter eigenvalue problems, see \eqref{eqn:Aij} below. They then use available algorithms for the multiparameter eigenvalue problem to solve these transformed MPPs.

The related multiparameter eigenvalue (MEV) problems of the form
\begin{equation}\label{eqn:Aij}
\left(\sum_{i=0}^{r} \l_i {\E}_{ij}\right) {\x_j}={\o},~\l_0,\l_1,\ldots,\l_r\in\CC,\qquad 0\ne x_j\in\CC^n,~j=1,\ldots,r,
\end{equation}
where the $A_{ij}\in\CC^{n\times n}$ are given square matrices have received considerable attention since the last century and during more recent years \cite{Atkinson1968,Atkinson1972,gadzhiev1987,volkmer2006,atkinson2010,Hoch2018}.
Although the multiparameter MPP we consider is different from the MEV problem, notably in that the matrices are tall and there is only one pencil, certain aspects of these problems are similar and we will use some of the machinery developed in \cite{Atkinson1972} for the solution of the MEV problem, suitably extended, to the non-square multi-parameter MPP. In \cite{Carmichael1921a,Carmichael1921b,Carmichael1922,Atkinson1968,Atkinson1972}, multiparameter spectral theory was firstly introduced. This involves the solution of a system of multiparameter linear eigenvalue problems of $r$-tuples. It is shown that the root vectors \cite{Atkinson1972} associated with a regular matrix pencil is decomposable in terms of its right eigenvectors of the underlying multiparameter system. Furthermore, the determinant operators associated with the regular pencil commute. In \cite{binding1989,sleeman1978,kosir1994,kosir2004,browne1972},  the solvability, regularity, and further classification of the multiparameter eigenvalue problem are discussed. 

In this paper, we use our preliminary investigation in \cite{Alsubaie2019}, which presents a theoretical study of the problem and its applications in the $\mathcal{H}_2$ optimal model reduction problem \cite{Serkan2008,beattie2017,Ahmad2010,Ahmad2011}, to give a full solution to the two-parameter MPP. Section~\ref{sec:3} gives a statement of the problem. It then provides the solution of the problem with one less matrix: for $A,B\in\CC^{m\times n}$, the matrix pencil $\nu_1A+\nu_2B$ loses rank if and only if the Kronecker commutator $\D=A\otimes B-B\otimes A$ loses rank and that the null space of $\D$, if nonempty, always contains a strongly decomposable vector. This result is then used in an inflation process to prove that the
two-parameter MPP is equivalent to a set of three $m^2\times n^2$ simultaneous one-parameter MPPs. These problems are given in terms of Kronecker commutator operators in the form of $\D$ defined above. These operators exhibit several symmetries and these are analysed in detail in Section~\ref{sec:Commutator}. Starting with the commutation matrix introduced in \cite{Tracy1969} and the symmetric projection and elimination matrices introduced in \cite{Magnus1979,Magnus1980}, we define a skew-symmetric projection matrix and three selection matrices. These are used to define two real orthogonal sparse matrices. It is shown that all Kronecker commutator operators of a given dimension can be block anti-diagonalized using these two matrices. Furthermore, $\D$ loses rank if and only if one of the anti-diagonal blocks loses rank. This result is then used in Section~\ref{sec:Deflation} to deflate the dimensions of the one-parameter MPPs to $\frac{m(m-1)}{2}\times\frac{n(n+1)}{2}$, thus simplifying their numerical solution. Section~\ref{sec:m=n+1} deals with the case $m=n+1$ showing that a solution to the two-parameter MPP always exists, and that it generically has $\frac{n(n+1)}{2}$ solutions. It also establishes, under a rank assumption, a commutativity property of the underlying deflated matrices, thus establishing that the two-parameter MPP is equivalent to a set of three $\frac{n(n+1)}{2}\times\frac{n(n+1)}{2}$ simultaneous eigenvalue problems. Section~\ref{sec:Examples} then summarises the contribution of the paper by presenting a solution algorithm and a few numerical examples to highlight the procedure of the proposed algorithm. Future research directions towards extending our solution approach to the general multiparameter MPP  are outlined in Section~\ref{sec:Future}. Finally our conclusions appear in Section~\ref{sec:Conclusions}.

\section{The Two-Parameter MPP}\label{sec:3}
In this section, we define and give a general solution to the two-parameter MPP:
\begin{prob}\label{prob:m=2}
Let $A_0,A_1,A_2\in\CC^{{m}\times{n}}$ be given where ${m}>{n}$ and assume that
\be\label{eqn:Assumptions}
\rrank\left(\left[\begin{array}{c}A_0\\A_1\\A_2\end{array}\right]\right)={n},\qquad\rrank\left(\left[\begin{array}{ccc}A_0&A_1&A_2\end{array}\right]\right)={m}.
\ee
Find all eigenvalues $\l$ and the corresponding eigenvectors $x$ such that 
\be\label{eqn:2-Parameter}
(\l_0A_0+\l_1A_1+\l_2A_2)x=0,\qquad 0\ne x\in\CC^{n},\qquad 0\ne\l:=\!\left[\!\begin{array}{c}\l_0\\\l_1\\\l_2\end{array}\!\right]\!\in\CC^3,
\ee
where we do not distinguish between an eigenvalue $\l$ and $\a\l$ for $0\ne\a\in\CC$ and between an eigenvector $x$ and $\beta x$ for $0\ne\beta\in\CC$.
\end{prob}
\begin{remark}\label{rem:Assumptions}
There is no loss of generality in making assumptions \eqref{eqn:Assumptions}. If they are not satisfied,  we can, using unitary transformations, apply a column compression \cite{Van1979} on the first (block column) matrix in \eqref{eqn:Assumptions} and/or a row compression on the second (block row) matrix in \eqref{eqn:Assumptions}, remove the zero columns and/or rows and get an equivalent pencil with smaller ${n}$ and/or $m$. While our results are valid without these assumptions, it is recommended to carry out the above procedure since the computational complexity of our solution increases rapidly with $m$ and $n$. Note also that the number of equations in \eqref{eqn:2-Parameter} is $m$ and the number of unknowns is $n\!+\!1$ ($n\!-\!1$ for nontrivial $x$ and 2 for nontrivial $\lambda$), hence our assumption that $m\!>\!n$: there are at least as many equations as unknowns. 
\end{remark}

Before we give a solution to Problem~\ref{prob:m=2}, we investigate the one-parameter MPP. We define the Kronecker commutator operator and investigate its properties. Further properties of the commutator operator will be given in Section~\ref{sec:Commutator} below.

\begin{thm}\label{thm:m=2-square}
Let $A,B\in\CC^{{m}\times{n}}$ and define the Kronecker commutator operator $\D=A\otimes B-B\otimes A\in\CC^{{m}^2\times {n}^2}$. Then the following two statements are equivalent
\be\label{eqn:(a)}
\D z=0,\qquad 0\ne z\in\CC^{{n}^2}\!.
\ee
\be\label{eqn:(b)}
(\nu_1A+\nu_2B)x=0,\qquad 0\ne x\in\CC^{n},\qquad 0\ne\!\left[\!\!\!\begin{array}{c}\nu_1\\\nu_2\end{array}\!\!\!\right]\!\in\CC^2\!.
\ee
Furthermore, if the null space $\mathcal{N}$ of $\Delta$ is nonempty, it includes a nonzero strongly decomposable vector. Finally, if $\mathcal{N}$ has dimension one, then $$\mathcal{N}=\{\alpha z:\alpha\in\CC,~z{\rm~is ~strongly~decomposable}\}.$$
\end{thm}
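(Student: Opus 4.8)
The plan is to prove the equivalence of \eqref{eqn:(a)} and \eqref{eqn:(b)} in both directions, then handle strong decomposability, and finally the one-dimensional case.

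First I would prove \eqref{eqn:(b)} $\Rightarrow$ \eqref{eqn:(a)}. Suppose $(\nu_1A+\nu_2B)x=0$ with $x\ne0$ and $(\nu_1,\nu_2)\ne0$; without loss of generality assume $\nu_1\ne0$ (the case $\nu_2\ne0$ is symmetric, or one rescales). Then $Ax = -(\nu_2/\nu_1)Bx$, so for $z=x\otimes x$ we compute, using the mixed-product property of the Kronecker product,
\[
\D z = (A\otimes B - B\otimes A)(x\otimes x) = (Ax)\otimes(Bx) - (Bx)\otimes(Ax).
\]
Substituting $Ax=-(\nu_2/\nu_1)Bx$ makes both terms equal to $-(\nu_2/\nu_1)(Bx)\otimes(Bx)$, so they cancel and $\D z=0$. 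Since $x\ne0$, $z=x^{\otimes2}\ne0$. This already shows that when $\mathcal N$ is nonempty it contains the strongly decomposable vector $x^{\otimes2}$, which takes care of the ``furthermore'' clause simultaneously — so I would fold that observation in here.

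The substantive direction is \eqref{eqn:(a)} $\Rightarrow$ \eqref{eqn:(b)}, and this is where I expect the main obstacle. Given $0\ne z\in\CC^{n^2}$ with $\D z=0$, I would reshape $z$ into a matrix $Z\in\CC^{n\times n}$ via $z=\vec(Z)$. Using the identity $\vec(BZA^{\trans}) = (A\otimes B)\vec(Z)$, the equation $\D z=0$ becomes the matrix equation $BZA^{\trans} = AZB^{\trans}$, i.e. $BZA^{\trans}$ is symmetric. The goal is to extract from this a common null vector of $\nu_1A+\nu_2B$ for some $(\nu_1,\nu_2)\ne0$. One route: consider the pencil $A Z B^{\trans} - \mu\, B Z A^{\trom}$... more cleanly, note that $A$ and $B$ are $m\times n$ with $m>n$, hence each has a nontrivial... no — they need not have nontrivial left behavior that helps directly. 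Instead I would argue on ranges: if $B Z A^{\trans}$ has a nontrivial kernel vector $w$ in a controlled way, one builds $x$ from $Z$ applied to $w$. Concretely, pick any $0\ne w$ with $A^{\trans}w$ and $B^{\trans}w$ not both zero (possible since $[A\ B]$ or stacking has full rank in our application, but here we only assume $A,B$ given — still, if $A^{\trans}w=B^{\trans}w=0$ for all $w$ then $A=B=0$ and the statement is vacuous). Set $x = Z(\alpha A^{\trans}w + \beta B^{\trans}w)$ for scalars to be chosen; symmetry of $BZA^{\trans}$ should force a linear dependence among $Ax$, $Bx$ after a suitable choice, yielding $(\nu_1A+\nu_2B)x=0$. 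The delicate point is ensuring $x\ne0$; this may require choosing $w$ in the range of an appropriate matrix and invoking that $Z\ne0$ together with a dimension count ($m>n$ providing the slack). I would expect to split into cases according to whether $\rrank[A\ B]<2n$ and whether $Z$ is invertible, handling the invertible case by a generalized-eigenvalue argument on $Z^{-1}\!$-conjugation of the symmetric pencil, and the singular case by restricting to a complement of $\ker Z$.

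Finally, for the one-dimensional case: if $\dim\mathcal N=1$, we have just shown $\mathcal N$ contains some nonzero strongly decomposable $x^{\otimes2}$, and since $\mathcal N$ is one-dimensional it must equal $\spn\{x^{\otimes2}\}=\{\alpha z:\alpha\in\CC,\ z\text{ strongly decomposable}\}$. This is immediate once the ``furthermore'' clause is in hand, so no extra work is needed beyond noting it.
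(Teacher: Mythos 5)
Your proof of \eqref{eqn:(b)}$\Rightarrow$\eqref{eqn:(a)} is correct and is essentially the paper's argument. The genuine gap is in \eqref{eqn:(a)}$\Rightarrow$\eqref{eqn:(b)}, which you yourself identify as the main obstacle but never actually close. Two problems. First, your starting point contains an error: $\D z=0$ gives $BZA^{T}=AZB^{T}$, and since $(BZA^{T})^{T}=AZ^{T}B^{T}$, this does \emph{not} say that $BZA^{T}$ is symmetric unless $Z$ itself is symmetric; the subsequent ``symmetry of $BZA^{T}$ should force a linear dependence among $Ax$, $Bx$'' step is therefore built on a false premise. Second, even setting that aside, the construction $x=Z(\alpha A^{T}w+\beta B^{T}w)$ is never shown to produce a nonzero $x$ with $(\nu_1A+\nu_2B)x=0$ for some nonzero $(\nu_1,\nu_2)$; the phrases ``should force,'' ``may require,'' and ``I would expect to split into cases'' mark exactly the places where the work remains to be done. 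Note also that the theorem makes no assumption $m>n$, so the ``slack'' you hope to extract from that inequality is not available.

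For comparison, the paper closes this direction by taking a rank factorization $Z=UV^{T}$ with $U,V\in\CC^{n\times r}$ of full column rank, rewriting the equation as $\left[\begin{array}{cc}BU&-AU\end{array}\right]\left[\begin{array}{cc}AV&BV\end{array}\right]^{T}=0$, performing two sequential row compressions on $\left[\begin{array}{cc}AV&BV\end{array}\right]$, and splitting into cases according to whether $r_1+r_2\le r$ (in which case the pencil $\nu_1AV+\nu_2BV$ is equivalent to one with at least as many columns as rows, hence has a solution $y$, giving $x=Vy$) or $r_1+r_2>r$ (in which case the block structure yields either $AUS_{22}^{T}=0$ or an $r\times r$ subpencil $\nu_1S_{11}^{T}+\nu_2S_{12}^{T}$ whose solution is pushed back through $U$). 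You would need this, or an equally concrete substitute, before the theorem is proved. A smaller ordering point: the ``furthermore'' clause is not a free by-product of \eqref{eqn:(b)}$\Rightarrow$\eqref{eqn:(a)} as you claim; to pass from ``$\mathcal N$ nonempty'' to an $x$ satisfying \eqref{eqn:(b)} you must first invoke \eqref{eqn:(a)}$\Rightarrow$\eqref{eqn:(b)}, so that clause stands or falls with the missing direction.
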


\begin{proof}
The proof of \eqref{eqn:(b)}$\Rightarrow$\eqref{eqn:(a)} was derived in Theorem~4.1 in \cite{Alsubaie2019}, but we include it here for completeness. If $Ax\!=\!0$ or $Bx\!=\!0$, then \eqref{eqn:(a)} follows with $z\!=\!x\otimes x$, so we can assume $Ax\!\ne\!0$ and $Bx\!\ne\!0$. By carrying out the manipulations
$$\bigl((\nu_1A+\nu_2B)x\bigl)\otimes(Bx)-(Bx)\otimes\bigl((\nu_1A+\nu_2B)x\bigl)=0,$$
and 
$$\bigl((\nu_1A+\nu_2B)x\bigl)\otimes(Ax)-(Ax)\otimes\bigl((\nu_1A+\nu_2B)x\bigl)=0,$$
we get
$\nu_1\D x^{\otimes2}=0$ and $\nu_2\D x^{\otimes2}=0$. Since $\nu_1$ and $\nu_2$ are not both zero, it follows that $\D x^{\otimes2}=0$ and this proves $\eqref{eqn:(a)}$. Next, we prove \eqref{eqn:(a)}$\Rightarrow$\eqref{eqn:(b)} by constructing $\nu_1,~\nu_2$ and $x\in\CC^{n}$ that satisfy \eqref{eqn:(b)}. Taking the inverse $\vect(\cdot)$ operation in \eqref{eqn:(a)}:
$$
BZA^T-AZB^T=0,\qquad\vec{(Z)}=z,\qquad 0\ne Z\in\CC^{{n}\times {n}}.
$$
Let $Z$ have rank $r$ where $1\!\le \!r\!\le\! {n}$. Then $Z\!=\!UV^T$ for some $U,V\!\in\!\CC^{{n}\times r}$ with full column rank. It follows that
$BUV^T\!A^T\!\!-\!AUV^T\!B^T\!\!=\!0$,
which can be written as
\be\label{eqn:AUAV2}
\left[\begin{array}{cc}BU&-AU\end{array}\right]\left[\begin{array}{cc}AV&BV\end{array}\right]^T=0.
\ee
Carrying out a row compression \cite{Van1979} on $AV$ and applying it to $BV$ we get:
\be\label{eqn:RowCompression1}
W \left[\begin{array}{c|c}AV&BV\end{array}\right]=\!\!\!\left.\begin{array}{rl}\left.\begin{array}{c}{\scriptstyle r_1}\\{\scriptstyle {m}-r_1}\end{array}\right.&\!\!\!\!\!\!\!\!\!\!\left[\begin{array}{c|c}\tilde{S}_{11}&\tilde{S}_{12}\\0&\tilde{S}_{22}\end{array}\right]\end{array}\right.\!\!\!\!\!,
\ee
where $r_1=\rrank(\tilde{S}_{11})=\rrank(AV)\le r$ and where $W\in\CC^{{m}\times {m}}$ is nonsingular. Next, we carry out a second row compression on $\tilde{S}_{22}$ and apply it to \eqref{eqn:RowCompression1}:
\be\label{eqn:RowCompression2}
\left[\begin{array}{cc}I_{r_1}&0\\0&\hat{W}\end{array}\right] W \left[\begin{array}{c|c}AV&BV\end{array}\right]=\left.\begin{array}{rl}\left.\begin{array}{c}{\scriptstyle r_1}\\{\scriptstyle r_2}\\{\scriptstyle {m}-r_1-r_2}\end{array}\right.&\!\!\!\!\!\!\!\!\!\!\left[\begin{array}{c|c}\tilde{S}_{11}&\tilde{S}_{12}\\0&\hat{S}_{22}\\0&0\end{array}\right]\end{array}\right.\!\!\!\!\!\!,
\ee
where $r_2=\rrank(\hat{S}_{22})=\rrank(\tilde{S}_{22})\le r$ and where $\hat{W}\in\CC^{({m}-r_1)\times({m}-r_1)}$ is nonsingular. There are two cases to consider:
\begin{enumerate}
\item If $r_1+r_2\le r$, then \eqref{eqn:RowCompression2} shows that the pencil $\nu_1AV\!+\!\nu_2BV$ is equivalent to an $(r_1\!+\!r_2)\times r$ pencil, with $r_1\!+\!r_2\!\le \!r$, and so has at least one solution (and might have up to $r$ finite solutions or a continuum of solutions). Let $(\nu_1AV\!+\!\nu_2BV)y\!=\!0$ be any solution. Then setting $x\!=\!Vy\!\ne\!0$ proves \eqref{eqn:(b)}. 
\item If $r_1+r_2>r$, we carry out a partitioning of \eqref{eqn:RowCompression2} as follows:
\be\label{eqn:RowCompression3}
\left[\begin{array}{cc}I_{r_1}&0\\0&\hat{W}\end{array}\right] W \left[\begin{array}{c|c}AV&BV\end{array}\right]=\!\!\!\!\left.\begin{array}{rl}\left.\begin{array}{c}{\scriptstyle r}\\{\scriptstyle r_1+r_2-r}\\{\scriptstyle {m}-r_1-r_2}\end{array}\right.&\!\!\!\!\!\!\!\!\!\!\left[\begin{array}{c|c}S_{11}&S_{12}\\0&S_{22}\\0&0\end{array}\right]\end{array}\right.\!\!\!\!\!\!,
\ee
so that $S_{22}$ has full row rank. It follows from \eqref{eqn:AUAV2} and \eqref{eqn:RowCompression3} that
\be\label{eqn:RowCompression4}
\left[\begin{array}{cc}BU&-AU\end{array}\right]\left[\begin{array}{cc}S_{11}^T&0\\S_{12}^T&S_{22}^T\end{array}\right]=0,
\ee
since $W$ and $\hat{W}$ are nonsingular. Equation \eqref{eqn:RowCompression4} allows us to derive two sets of solutions to \eqref{eqn:(b)}:
\begin{enumerate}
\item The equation corresponding to the second column of \eqref{eqn:RowCompression4} gives $AUS^T_{22}=0$. Since $S^T_{22}$ has full column rank, there exists $y\ne0$ such that $S^T_{22}y\ne0$. Then, $\nu_2=0$, $0\ne\nu_1\in\CC$ arbitrary and $x=US^T_{22}y\ne0$ satisfy \eqref{eqn:(b)}.
\item The equation corresponding to the first column of \eqref{eqn:RowCompression4} gives
\be\label{eqn:RowCompression5}
BUS^T_{11}-AUS^T_{12}=0,
\ee
where $S_{11},S_{12}\!\in\!\CC^{r\times r}$. Hence there exist $\nu_1,\nu_2\!\in\!\CC$, not both zero, and $0\!\ne\! y\!\in\!\CC^r$, such that $(\nu_1S^T_{11}\!+\!\nu_2S_{12}^T)y\!=\!0$.
Note that, by the sequential row compression construction above, $S_{11}^T$ and $S_{12}^T$ have no common null space, so that $S_{11}^Ty\!=\!0\!\Rightarrow \!S_{12}^Ty\!\ne\!0$. The remainder of the proof depends on whether $S_{11}^Ty\!=\!0$:
\begin{enumerate}
\item If $S_{11}^Ty\ne0$ so that $\nu_2\ne0$, post-multiplying \eqref{eqn:RowCompression5} by $\nu_2y$ gives
$$
\nu_2BUS^T_{11}y-\nu_2AUS^T_{12}y=0.
$$
Substituting $-\nu_2S_{12}^Ty=\nu_1S_{11}^Ty$ gives
$
(\nu_1A+\nu_2B)US_{11}^Ty=0,
$
and defining $x=US_{11}^Ty\ne0$ then gives \eqref{eqn:(b)}. 
\item If $S_{11}^Ty=0$, then $S_{12}^Ty\ne0,~\nu_2=0$ and $\nu_1\ne0$. Post-multiplying \eqref{eqn:RowCompression5} by $\nu_1y$ gives
$
\nu_1AUS^T_{12}y=0.
$
Defining $x=US_{12}^Ty\ne0$ then gives \eqref{eqn:(b)}. Note that in this case, $A$ loses column rank.
\end{enumerate}
\end{enumerate}
\end{enumerate}
This completes the proof that \eqref{eqn:(a)}$\Rightarrow$\eqref{eqn:(b)}. 

Next, we prove that if $\mathcal{N}$ is nonempty, it includes a nonzero strongly decomposable vector. Suppose $\mathcal{N}$ is nonempty so that $\D z=0$ for some $0\ne z\in\CC^{n^2}$. Then \eqref{eqn:(b)} is satisfied and the proof of \eqref{eqn:(b)}$\Rightarrow$\eqref{eqn:(a)} shows that $\D x^{\otimes2}=0$, $x\ne0$ and so $x^{\otimes2}\in\mathcal{N}$. This also proves the last part.
\end{proof}
\begin{remark}
Theorem~\ref{thm:m=2-square} extends Theorems~6.3.1 and 6.3.2 in \cite{Atkinson1972} to non-square matrices. In the case that the one-parameter MPP in \eqref{eqn:(b)} has no solution, \cite{boutry2005} defines the notion of the nearest solvable one-parameter MPP in terms of the size of matrix perturbations to $A$ and $B$. A possible future research direction would be to use our approach to extend this result to the multiparameter MPP.
\end{remark}

Next, we use Theorem~\ref{thm:m=2-square} to give a solution to Problem~\ref{prob:m=2}.
\begin{thm}\label{thm:1-Parameter}
Let $A_0,A_1,A_2\in\CC^{{m}\times{n}}$ be as given in Problem~\ref{prob:m=2} and define the Kronecker commutator operators
\be\label{eqn:Dis}
\Delta_0\!:=\!A_1\otimes A_2\!-\!A_2\otimes A_1,\quad
\Delta_1\!:=\!A_2\otimes A_0\!-\!A_0\otimes A_2,\quad
\Delta_2\!:=\!A_0\otimes A_1\!-\!A_1\otimes A_0,
\ee
so that $\D_0,\D_1,\D_2\in\CC^{m^2\times n^2}$. Then the ${m}\times {n}$ two-parameter MPP in \eqref{eqn:2-Parameter} has a solution if and only if the following three ${m}^2\times {n}^2$ one-parameter MPPs
\be\label{eqn:1-Parameter}
(\l_i\Delta_j\!-\!\l_j\Delta_i)z\!=\!0,~ (i,j)\!\in\!\{(0,1),(0,2),(1,2)\},\quad
 0\!\ne\!\!\left[\!\!\begin{array}{c}\l_0\\\l_1\\\l_2\end{array}\!\!\right]\!\!\!\in\!\CC^3,\quad
 0\!\ne\! z\!\in\!\CC^{{n}^2}\!,
\ee
have a simultaneous solution. Hence, if the one-parameter MPPs in \eqref{eqn:1-Parameter} have a simultaneous solution then there exists a strongly decomposable vector $0\ne z= x^{\otimes2}$ such that $(\l_i\Delta_j-\l_j\Delta_i) x^{\otimes2}=0,~(i,j)\in\{(0,1),(0,2),(1,2)\}$, and $(\l_0,\l_1,\l_2)\!\ne\!0$ such that $(\l_0A_0+\l_1A_1+\l_2A_2)x=0$. 
\end{thm}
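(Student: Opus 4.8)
The plan is to reduce Theorem~\ref{thm:1-Parameter} to repeated applications of Theorem~\ref{thm:m=2-square} via an inflation argument. First I would prove the forward direction: assume \eqref{eqn:2-Parameter} has a solution, i.e.\ $(\l_0A_0+\l_1A_1+\l_2A_2)x=0$ with $x\ne0$ and $\l\ne0$. Writing $A(\l)=\l_0A_0+\l_1A_1+\l_2A_2$, the key observation is that for each pair $(i,j)$ the pencil $\l_i\Delta_j-\l_j\Delta_i$ can be re-expressed in terms of $A(\l)$ tensored against the $A_k$'s. For instance, with $(i,j)=(0,1)$ one has
\[
\l_0\Delta_1-\l_1\Delta_0
 = \l_0(A_2\otimes A_0-A_0\otimes A_2) - \l_1(A_1\otimes A_2-A_2\otimes A_1),
\]
and after adding and subtracting the $\l_2$-term one can arrange this as a combination of $A(\l)\otimes A_2$ and $A_2\otimes A(\l)$ (up to the appropriate scalars), so that $(\l_0\Delta_1-\l_1\Delta_0)x^{\otimes2}=0$ follows directly from $A(\l)x=0$. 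Doing the analogous manipulation for $(0,2)$ and $(1,2)$ (pairing against $A_1$ and $A_0$ respectively) shows $z=x^{\otimes2}$ is a simultaneous nonzero solution of all three MPPs in \eqref{eqn:1-Parameter}, which also yields the strongly-decomposable conclusion for free.

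For the converse, suppose the three one-parameter MPPs in \eqref{eqn:1-Parameter} have a simultaneous solution $z\ne0$ with $\l\ne0$. The natural route is the inflation/deflation idea: fix the pair, say $(0,1)$, and apply Theorem~\ref{thm:m=2-square} to the matrices $A=\Delta_0$, $B=\Delta_1$ (so the commutator there is $\Delta_0\otimes\Delta_1-\Delta_1\otimes\Delta_0$); but more directly, since \eqref{eqn:1-Parameter} is literally a one-parameter pencil $(\l_i\Delta_j-\l_j\Delta_i)z=0$ in the two scalars $(\l_i,\l_j)$, Theorem~\ref{thm:m=2-square} (applied with the roles $\nu_1\Delta_j+\nu_2(-\Delta_i)$) tells us the null space contains a strongly decomposable vector $w^{\otimes2}$ for some $w\in\CC^n$. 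The crux is then to show that the SAME strongly decomposable vector can be chosen across all three pairs, and that the corresponding $w$ satisfies $A(\l)w=0$. Here is where I expect the main obstacle: a priori the three MPPs in \eqref{eqn:1-Parameter} could produce different decomposable witnesses $w_1,w_2,w_3$. The last sentence of Theorem~\ref{thm:1-Parameter} asserts (and the authors presumably prove) that one can extract a single $z=x^{\otimes2}$ simultaneously — this must use that the solution $z$ of the joint system is itself shared, together with the dimension-one case of Theorem~\ref{thm:m=2-square} or a more careful structural argument about the intersection of the three null spaces.

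Concretely, I would argue as follows: from $(\l_0\Delta_1-\l_1\Delta_0)z=0$, undo $\vec(\cdot)$ to get a matrix equation $\Delta_1$-vs-$\Delta_0$ acting on $Z$ with $\vec(Z)=z$, factor $Z=UV^T$ through its rank, and run the row-compression machinery of Theorem~\ref{thm:m=2-square} to land on a vector $x$ with $(\l_0 B_1+\l_1 B_0)$-type relations, where the $B_k$ are built from the original $A_k$. Unwinding the Kronecker structure of $\Delta_i=A_j\otimes A_k-A_k\otimes A_j$ should show that the resulting $x$ satisfies a relation forcing $A(\l)x=0$, because the commutator structure of the $\Delta$'s mirrors that of a single three-term pencil. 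The bookkeeping tying the $(i,j)$-indexed commutators back to $A(\l)$ is the technical heart; once $A(\l)x=0$ is in hand, $z=x^{\otimes2}$ solves all three MPPs by the already-proved forward direction, closing the loop. I would therefore spend most of the proof on (i) the algebraic identities expressing $\l_i\Delta_j-\l_j\Delta_i$ through $A(\l)$, and (ii) verifying that the witness extracted from one pair is consistent with the other two, invoking the uniqueness clause of Theorem~\ref{thm:m=2-square} when the relevant null space is one-dimensional and handling the higher-dimensional case separately.
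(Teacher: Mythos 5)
Your forward direction is exactly the paper's argument: expand $\bigl(A(\l)x\bigr)\otimes(A_kx)-(A_kx)\otimes\bigl(A(\l)x\bigr)=0$ by bilinearity to obtain $(\l_i\Delta_j-\l_j\Delta_i)x^{\otimes2}=0$ for each pair. Your closing move for the converse --- once $A(\l)x=0$ is recovered, feed it back through the forward direction so that the single vector $x^{\otimes2}$ solves all three MPPs simultaneously --- is also the paper's, and it already disposes of your worry about three different decomposable witnesses: only \emph{one} of the three pencils needs to be solvable to start the argument, and the common witness is then manufactured at the end rather than reconciled across pairs.

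The gap is in how you recover $A(\l)x=0$ from $(\l_i\Delta_j-\l_j\Delta_i)z=0$. Applying Theorem~\ref{thm:m=2-square} ``with the roles $\nu_1\Delta_j+\nu_2(-\Delta_i)$'' does not do what you claim: taking $A=\Delta_j$, $B=-\Delta_i$ in that theorem makes statement \eqref{eqn:(a)} a statement about the $m^4\times n^4$ commutator $\Delta_j\otimes(-\Delta_i)-(-\Delta_i)\otimes\Delta_j$ and about decomposable vectors $w^{\otimes2}$ with $w\in\CC^{n^2}$; it says nothing about the null space of the pencil $\nu_1\Delta_j-\nu_2\Delta_i$ itself containing an $x^{\otimes2}$ with $x\in\CC^{n}$, nor does it tie such an $x$ back to the original $A_k$. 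Your fallback --- rerunning the row-compression machinery on $\Delta_j$ versus $\Delta_i$ and ``unwinding the Kronecker structure'' --- is precisely the step you leave unproved, and it is where all the content lies. The identity that makes the paper's converse work, and which you never write down, is that a linear combination of two of the $\Delta$'s is \emph{itself} a single Kronecker commutator of $m\times n$ matrices: by bilinearity of $\otimes$,
\[
\mu_0(A_0\otimes A_2-A_2\otimes A_0)+\mu_1(A_1\otimes A_2-A_2\otimes A_1)
=(\mu_0A_0+\mu_1A_1)\otimes A_2-A_2\otimes(\mu_0A_0+\mu_1A_1),
\]
so the pencil $\l_0\Delta_1-\l_1\Delta_0$ (and likewise the other two pairs) equals $\pm(A\otimes B-B\otimes A)$ with $A=\mu_0A_0+\mu_1A_1$ and $B=A_2$. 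Theorem~\ref{thm:m=2-square}, implication \eqref{eqn:(a)}$\Rightarrow$\eqref{eqn:(b)}, then applies verbatim and yields $(\nu_1,\nu_2)\ne0$ and $x\ne0$ with $\bigl(\nu_1(\mu_0A_0+\mu_1A_1)+\nu_2A_2\bigr)x=0$, i.e.\ $A(\l)x=0$ with $\l=(\nu_1\mu_0,\nu_1\mu_1,\nu_2)\ne0$. Without this identity your converse does not go through; with it, the rest of your outline is correct and coincides with the paper's proof.
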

\begin{proof}
First, we prove that \eqref{eqn:2-Parameter}$\Rightarrow$\eqref{eqn:1-Parameter}. If $A_0x=0$, then by taking $z= x^{\otimes2}$, $\mu_1=\mu_2=0$ and any $0\ne\mu_0\in\CC$, it can be verified that \eqref{eqn:1-Parameter} is satisfied. A similar argument shows that \eqref{eqn:1-Parameter} is satisfied if $A_1x=0$ or $A_2x=0$. Thus we can assume that $A_0x\ne0$, $A_1x\ne0$ and $A_2x\ne0$. By carrying out the manipulation $$\bigl((\l_0A_0+\l_1A_1+\l_2A_2)x\bigl)\otimes(A_2x)-(A_2x)\otimes\bigl((\l_0A_0+\l_1A_1+\l_2A_2)x\bigl)=0,$$
and the same with subscripts 1 and 2 exchanged and 0 and 2 exchanged, we get
$$
(\l_0\D_1-\l_1\Delta_0) x^{\otimes2}=0,\qquad (\l_0\Delta_2-\l_2\D_0) x^{\otimes2}=0,\qquad (\l_1\Delta_2-\l_2\D_1) x^{\otimes2}=0,
$$
which gives \eqref{eqn:1-Parameter} with $z= x^{\otimes2}\ne0$.

Next, we prove that \eqref{eqn:1-Parameter}$\Rightarrow$\eqref{eqn:2-Parameter}. We prove that if one of the MPPs in \eqref{eqn:1-Parameter} has a solution, say the first, then they all have a simultaneous solution such that \eqref{eqn:2-Parameter} is satisfied. Suppose that the first pencil in \eqref{eqn:1-Parameter} has a solution so that 
$$
\big(\mu_0(A_0\!\otimes\! A_2\!-\!A_2\!\otimes\! A_0)\!+\!\mu_1(A_1\!\otimes\! A_2\!-\!A_2\!\otimes\! A_1)\big)z\!=\!0,\quad\left[\!\!\begin{array}{c}\mu_0\\\mu_1\end{array}\!\!\right]\!\!\ne\!0,\quad z\!\ne\!0,
$$
which can be written as
$$
\big((\mu_0A_0+\mu_1A_1)\!\otimes\! A_2\!-\!A_2\!\otimes\!(\mu_0A_0+\mu_1A_1) \big)z\!=\!0,\quad\left[\!\!\begin{array}{c}\mu_0\\\mu_1\end{array}\!\!\right]\!\!\ne\!0,\quad z\!\ne\!0.
$$
It follows from Theorem~\ref{thm:m=2-square} (with $A=\mu_0A_0+\mu_1A_1$ and $B=A_2$) that there exists $\nu_1,\nu_2$, not both zero, and $x\ne0$ such that
$$
\big(\nu_1(\mu_0A_0+\mu_1A_1)+\nu_2A_2\big)x=0.
$$
Since $\nu_1,\nu_2$ are not both zero and $\mu_0,\mu_1$ are not both zero then $\l_0\!:=\!\nu_1\mu_0,~\l_1\!:=\!\nu_1\mu_1,~\l_2\!:=\!\nu_2$ 
are not all zero and this proves \eqref{eqn:2-Parameter}. Furthermore, since \eqref{eqn:2-Parameter} is satisfied, the proof of \eqref{eqn:2-Parameter}$\Rightarrow$\eqref{eqn:1-Parameter} shows that \eqref{eqn:1-Parameter} have a simultaneous solution $(\l_0,\l_1,\l_2)$ and a strongly decomposable $z\!=\!x^{\otimes2}$. This proves \eqref{eqn:1-Parameter}$\Rightarrow$\eqref{eqn:2-Parameter}.
\end{proof}

\begin{remark}\label{rem:3.2significant}
    The significance of Theorem~\ref{thm:m=2-square} (specifically that the null space of the Kronecker commutator always has a strongly decomposable element) is that it allows us to capture all (and only) the solutions of the two-parameter MPP in \eqref{eqn:1-Parameter}. In \cite{Alsubaie2019}, a regularity assumption was needed to show that \eqref{eqn:1-Parameter} captures only the solutions of the two-parameter MPP Problem~\ref{prob:m=2}.
\end{remark}

\section{The Kronecker Commutator and Anti-Commutator Operators}\label{sec:Commutator}
Since the MPPs in Theorem~\ref{thm:m=2-square} and Theorem~\ref{thm:1-Parameter} are given in terms of Kronecker commutator operators of the form $\D=A\otimes B-B\otimes A$ for $A,B\in\CC^{{m}\times{n}}$, we give a detailed investigation of their properties in this section. This will be used in Section~\ref{sec:Deflation} to highlight further properties of the one-parameter MPPs in Theorem~\ref{thm:1-Parameter} and simplify the computation of their solution.

We start with the definition of the commutation matrix and the symmetric and skew-symmetric projection matrices. The following lemma gives the definition and presents some of their properties needed in our work.

\begin{lemma}\label{lem:commutation}
Let  $\K^{{n}}$ denote the commutation matrix (first introduced in a different form in \cite{Tracy1969}, re-named and developed further in \cite{macrae1974} and re-named again and defined explicitly in the current form in  \cite{Magnus1979,Magnus1980}) as
$$
\K^{n}\!=\!\sum_{i=1}^{n}\sum_{j=1}^{n} (e^{n}_i{e^{n}_j}^{T})\otimes (e^{n}_j{e^{n}_i}^{T})\!=\!\sum_{i=1}^{n}{e^{n}_i}^{T}\otimes I_{n}\otimes e^{n}_i\!=\!\sum_{i=1}^{n}e^{n}_i\otimes I_{n}\otimes {e^{n}_i}^{T}\!\in\!\RR^{{n}^2\times {n}^2},
$$
where $e^{n}_i$ denotes the $i$th column of $I_{n}$.
Define the symmetric and skew-symmetric projection matrices as  \be\label{eqn:HnFn}
{\cH^{{n}}}:=\frac{1}{2}(I_{{n}^2}+\K^{{n}})\in\RR^{{n}^2\times {n}^2},\qquad{\F^{{n}}}:=\frac{1}{2}(I_{{n}^2}-\K^{{n}}) \in\RR^{{n}^2\times {n}^2},
\ee
respectively. Then the following properties can be readily established from the definitions and simple manipulations:
\begin{enumerate}[series=prop,label=P.\arabic*,itemsep=3mm,topsep=3mm]
\item $\quad\left[\begin{array}{ccc}\K^{n}&{\cH^{n}}&{\F^{n}}\end{array}\right]=\left[\begin{array}{ccc}{\K^{n}}^T&{\cH^{n}}^T&{\F^{n}}^T\end{array}\right]\!\cdot$\label{itm:P1}
\item $\quad \left[\!\!\!\begin{array}{c}\K^{n}\\{\cH^{n}}\\{\F^{n}}\end{array}\!\!\!\right]\!\!\left[\!\!\!\begin{array}{ccc}\K^{n}&{\cH^{n}}&{\F^{n}}\end{array}\!\!\!\right]\!=\!\left[\!\!\!\begin{array}{ccc}I_{{n}^2}&{\cH^{n}}\K^{n}&{\F^{n}}\K^{n}\\
\K^{n}{\cH^{n}}&{\cH^{n}}&0\\
\K^{n}{\F^{n}}&0&{\F^{n}}\end{array}\!\!\!\right]\!=\!\left[\!\!\!\begin{array}{ccc}I_{{n}^2}&{\cH^{n}}&-{\F^{n}}\\
{\cH^{n}}&{\cH^{n}}&0\\
-{\F^{n}}&0&{\F^{n}}\end{array}\!\!\!\right]\!\!\cdot$\label{itm:P2}
\item $\quad {\cH^{{n}}} + {\F^{{n}}} =({\cH^{{n}}})^2 + ({\F^{{n}}})^2 = I_{{n}^2}$.\label{itm:P3}
\end{enumerate}
The next property gives implicit definitions of the commutation, symmetric and skew-symmetric projection matrices in term of their action on $\vec(Z)$, for any $Z\in\CC^{{n}\times {n}}$:
\begin{enumerate}[resume*=prop]
\item $\quad \left[\begin{array}{c}\K^{n}\\{\cH^{n}}\\{\F^{n}}\end{array}\right]\vec(Z)=\left[\begin{array}c\vec(Z^T)\\\frac{1}{2}\vec(Z+Z^T)\\\frac{1}{2}\vec(Z-Z^T)\end{array}\right]\!\!\cdot$\label{itm:P4}\\
\end{enumerate}
The following properties relate to the action on Kronecker products of any $A, B\in\CC^{{m}\times{n}}$, the Kronecker commutator operator $\D:=A\otimes B-B\otimes A\in\CC^{{m}^2\times {n}^2}$ and the Kronecker anti-commutator operator $\Dt:=A\otimes B+B\otimes A\in\CC^{{m}^2\times {n}^2}$:
\begin{enumerate}[resume*=prop]
\item $\quad \K^{m}(A\otimes B)\left[\begin{array}{cc}\K^{n}&I_{{n}^2}\end{array}\right]=(B\otimes A)\left[\begin{array}{cc}I_{{n}^2}&\K^{n}\end{array}\right]\!\cdot$\label{itm:P5}
    \item $\quad\left[\begin{array}{c}\K^{m}\D\\{\cH^{m}}\D\\{\F^{m}}\D\end{array}\right]=\left[\begin{array}{r}-\D\K^{n}\\\D {\F^{n}}\\\D{\cH^{n}}\end{array}\right]\!\!,
    \qquad\left[\begin{array}{c}\K^{m}\Dt\\{\cH^{m}}\Dt\\{\F^{m}}\Dt\end{array}\right]=\left[\begin{array}{c}\Dt\K^{n}\\\Dt {\cH^{n}}\\\Dt{\F^{n}}\end{array}\right]\!\!\cdot$\label{itm:P6}
        \item $\quad\left[\!\!\!\begin{array}{c}\K^{m}\\{\cH^{m}}\\{\F^{m}}\end{array}\!\!\!\right]\!\!\D\!\left[\!\!\!\begin{array}{ccc}\K^{n}&\!\!\!{\cH^{n}}&\!\!\!{\F^{n}}\end{array}\!\!\!\right]\!\!=\!\!\left[\!\!\!\begin{array}{ccc}-\D&-{\F^{m}}\D\K^{n}&-{\cH^{m}}\D\K^{n}\\
    -\K^{m}\D{\F^{n}}&0&2{\cH^{m}}(A\otimes B){\F^{n}}\\
    -\K^{m}\D{\cH^{n}}&2{\F^{m}}(A\otimes B){\cH^{n}}&0\end{array}\!\!\!\right]\!\!\cdot$
    
    $\quad\left[\!\!\!\begin{array}{c}\K^{m}\\{\cH^{m}}\\{\F^{m}}\end{array}\!\!\!\right]\!\!\Dt\!\left[\!\!\!\begin{array}{ccc}\K^{n}&\!\!\!{\cH^{n}}&\!\!\!{\F^{n}}\end{array}\!\!\!\right]\!\!=\!\!\left[\!\!\!\begin{array}{ccc}\Dt&{\cH^{m}}\Dt\K^{n}&{\F^{m}}\Dt\K^{n}\\
    \K^{m}\Dt{\cH^{n}}&2{\cH^{m}}(A\otimes B){\cH^{n}}&0\\
    \K^{m}\Dt{\F^{n}}&0&2{\F^{m}}(A\otimes B){\F^{n}}\end{array}\!\!\!\right]\!\!\cdot$
    \label{itm:P7}
\end{enumerate}
Furthermore, suppose that $z=\vec(Z)\in\CC^{{n}^2}$ for any $Z\in\CC^{{n}\times {n}}$. Then
\be\label{eqn:sym-skew}
z=({\cH^{n}}+{\F^{n}})z={\cH^{n}}z+{\F^{n}}z=z_\mathrm{sym}+z_{\mathrm{skew}},
\ee
where,
\be\label{eqn:sym-skew-expressions}
z_\mathrm{sym}={\cH^{n}}z=\frac{1}{2}\vec(Z+Z^T),\qquad
z_\mathrm{skew}={\F^{n}}z=\frac{1}{2}\vec(Z-Z^T),
\ee
are the symmetric and skew-symmetric parts of $z$, respectively. Finally, if $z\in\CC^{n^2}$ is in the null space of $\D$, then so are $z_\mathrm{sym}$ and $z_\mathrm{skew}$.
\end{lemma}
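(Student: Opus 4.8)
The plan is to establish the claimed properties in the order they are stated, leaning almost entirely on the identities collected in \ref{itm:P1}--\ref{itm:P7} and on elementary manipulations of the $\vec$ operator and the Kronecker product. The bulk of Lemma~\ref{lem:commutation} is computational: each of \ref{itm:P1}--\ref{itm:P4} follows directly from the three given expressions for $\K^{n}$ together with $\vec(AXB)=(B^{T}\otimes A)\vec(X)$ and the standard fact that $\K^{n}\vec(Z)=\vec(Z^{T})$; then \ref{itm:P3} is immediate since ${\cH^{n}}$ and ${\F^{n}}$ are complementary orthogonal projections ($({\cH^{n}})^{2}={\cH^{n}}$, $({\F^{n}})^{2}={\F^{n}}$, ${\cH^{n}}{\F^{n}}=0$), which in turn follows from $(\K^{n})^{2}=I_{{n}^{2}}$ and the symmetry in \ref{itm:P1}. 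So I would first record these, verifying $(\K^{n})^{2}=I$ and the symmetry of $\K^{n}$ from the explicit sum, and note that \ref{itm:P2} is just the array of all products of the three matrices, simplified via $\K^{n}{\cH^{n}}={\cH^{n}}\K^{n}={\cH^{n}}$ and $\K^{n}{\F^{n}}={\F^{n}}\K^{n}=-{\F^{n}}$.

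Next I would prove \ref{itm:P5}, the key intertwining identity for a single Kronecker product: for any $A,B\in\CC^{m\times n}$ one has $\K^{m}(A\otimes B)=(B\otimes A)\K^{n}$, which is the well-known commutation-matrix identity, and separately $\K^{m}(A\otimes B)=(B\otimes A)\K^{n}$ rearranged gives the stated two-column form once one also uses $\K^{m}(A\otimes B)\K^{n}\cdot\K^{n}=\K^{m}(A\otimes B)$. From \ref{itm:P5} applied to both $A\otimes B$ and $B\otimes A$, subtracting and adding yields \ref{itm:P6}: e.g. $\K^{m}\D=\K^{m}(A\otimes B)-\K^{m}(B\otimes A)=(B\otimes A)\K^{n}-(A\otimes B)\K^{n}=-\D\K^{n}$, and then ${\cH^{m}}\D=\tfrac12(\D+\K^{m}\D)=\tfrac12(\D-\D\K^{n})=\D{\F^{n}}$, and similarly ${\F^{m}}\D=\D{\cH^{n}}$; the anti-commutator relations come out with all signs positive by the same two lines. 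Property \ref{itm:P7} is then obtained by sandwiching: multiply \ref{itm:P6} on the right by $[\,\K^{n}\ {\cH^{n}}\ {\F^{n}}\,]$ and simplify each of the nine blocks using \ref{itm:P2}, \ref{itm:P6}, and the observation that $\D{\cH^{n}}-(B\otimes A){\cH^{n}}=(A\otimes B){\cH^{n}}-\K^{m}(A\otimes B){\cH^{n}}$ collapses the diagonal entries to the $2{\cH^{m}}(A\otimes B){\F^{n}}$ and $2{\F^{m}}(A\otimes B){\cH^{n}}$ terms; the zeros on the (2,2) and (3,3) positions of the commutator array are forced because ${\cH^{m}}\D{\cH^{n}}={\cH^{m}}\cdot\D{\cH^{n}}={\cH^{m}}\cdot{\F^{m}}\D=0$ by \ref{itm:P6} and ${\cH^{m}}{\F^{m}}=0$.

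Finally, the displayed decomposition \eqref{eqn:sym-skew}--\eqref{eqn:sym-skew-expressions} is just $z=({\cH^{n}}+{\F^{n}})z$ from \ref{itm:P3} read through \ref{itm:P4}, which identifies ${\cH^{n}}z$ and ${\F^{n}}z$ with $\tfrac12\vec(Z+Z^{T})$ and $\tfrac12\vec(Z-Z^{T})$. For the closing claim, suppose $\D z=0$. Using the first relation in \ref{itm:P6}, $\D{\F^{n}}={\cH^{m}}\D$ so $\D z_{\mathrm{skew}}=\D{\F^{n}}z={\cH^{m}}\D z={\cH^{m}}\cdot 0=0$, and likewise $\D z_{\mathrm{sym}}=\D{\cH^{n}}z={\F^{m}}\D z=0$; hence both components lie in the null space of $\D$, as asserted. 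I do not anticipate a genuine obstacle here — everything reduces to bookkeeping with the commutation matrix — but the one place demanding care is the nine-block simplification in \ref{itm:P7}, where the cancellations producing the off-diagonal $2{\cH^{m}}(A\otimes B){\F^{n}}$ and $2{\F^{m}}(A\otimes B){\cH^{n}}$ terms (and the vanishing of the would-be symmetric--symmetric and skew--skew blocks of the commutator) must be tracked sign by sign; I would do this by first writing every block in terms of $A\otimes B$ alone using \ref{itm:P5} before combining.
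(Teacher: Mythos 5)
Your proposal is correct and follows essentially the same route as the paper: P.1--P.4 from the definitions and $(\K^n)^2=I_{n^2}$, P.5 as the standard commutation identity, P.6 derived from P.5, P.7 by sandwiching with P.6 and reducing every block to $A\otimes B$, and the final null-space claim via $\D\F^n={\cH^m}\D$ and $\D{\cH^n}={\F^m}\D$. The only blemish is the parenthetical identity $\D{\cH^{n}}-(B\otimes A){\cH^{n}}=(A\otimes B){\cH^{n}}-\K^{m}(A\otimes B){\cH^{n}}$ in your discussion of P.7, which is garbled as written (the intended and correct fact is $(B\otimes A){\cH^n}=\K^m(A\otimes B){\cH^n}$, giving $\D{\cH^n}=2{\F^m}(A\otimes B){\cH^n}$), but this does not affect the validity of the argument.
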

\begin{proof}
Properties~\ref{itm:P1}-\ref{itm:P3} follow from the definitions and simple manipulations and Property~\ref{itm:P4} gives the implicit definitions. Property~\ref{itm:P5} is proved in \cite{Magnus1979,Magnus1980} and gives $\K^{n}$ its name. Properties~\ref{itm:P6} and \ref{itm:P7} follow from Property~\ref{itm:P5} and the definitions and simple manipulations. The proof of \eqref{eqn:sym-skew} and the expressions in \eqref{eqn:sym-skew-expressions} follow from Property~\ref{itm:P3} and the definitions in \eqref{eqn:HnFn}. Finally, suppose that $\D z=0$. Then
\bean
\D z=0&\Rightarrow& {\cH^{n}}\D z=0,\quad {\F^{n}}\D z=0,\\
&\Rightarrow& \D{\F^{n}}z=0,\quad \D{\cH^{n}}z=0,\quad({\rm from~\ref{itm:P6}})
\eean
which proves that $\D z_\mathrm{sym}=0$ and $\D z_\mathrm{skew}=0$ from the definitions in \eqref{eqn:sym-skew-expressions}.
\end{proof}

For illustration, we give the commutation, symmetric and skew-symmetric projection matrices for ${n}=3$ with the entries of $Z$ being the labels for clarity:
\bean
\K^3\!=\!\!\!\!\!\!
\left.\begin{array}{rl}
        &
        \left.\begin{array}{ccccccccc}
        \!\!\!\!\!\!\!{\scriptstyle11}&\!\!{\scriptstyle21}&\!\!{\scriptstyle31}&\!\!{\scriptstyle12}&\!{\scriptstyle22}&\!\!{\scriptstyle32}&\!\!{\scriptstyle13}&\!{\scriptstyle23}&\!\!{\scriptstyle33}\end{array}\right.\\
        \left.\begin{array}{c}
        {\scriptstyle11}\\
        {\scriptstyle21}\\
        {\scriptstyle31}\\
        {\scriptstyle12}\\
        {\scriptstyle22}\\
        {\scriptstyle32}\\
        {\scriptstyle13}\\
        {\scriptstyle23}\\
        {\scriptstyle33}
        \end{array}\right.&
\!\!\!\!\!\!\!\!\left[\!\begin{array}{ccccccccc}
     1  &   0  &   0 &    0  &   0 &    0   &  0 &    0 &    0\\
     0   &  0  &   0  &   1&     0  &   0   &  0  &   0  &   0\\
     0&     0&     0&     0&     0&     0&     1&     0&     0\\
     0&     1&     0&     0&     0&     0&     0&     0&     0\\
     0&     0&     0&     0&     1&     0&     0&     0&     0\\
     0&     0&     0&     0&     0&     0&     0&     1&     0\\
     0&     0&     1&     0&     0&     0&     0&     0&     0\\
     0&     0&     0&     0&     0&     1&     0&     0&     0\\
     0&     0&     0&     0&     0&     0&     0&     0&     1
     \end{array}\!\right]\!\!
     \end{array}\right.\!\!\!\!,~\cH^3\!=\!\!\!\!\!\!\!
     \left.\begin{array}{rl}
        &
        \left.\begin{array}{ccccccccc}
        \!\!\!\!\!\!\!\!\!{\scriptstyle11}&\!{\scriptstyle21}&\!{\scriptstyle31}&\!{\scriptstyle12}&\!\!{\scriptstyle22}&\!{\scriptstyle32}&\!{\scriptstyle13}&\!{\scriptstyle23}&\!{\scriptstyle33}\end{array}\right.\\
        \left.\begin{array}{c}
        {\scriptstyle11}\\
        {\scriptstyle21}\\
        {\scriptstyle31}\\
        {\scriptstyle12}\\
        {\scriptstyle22}\\
        {\scriptstyle32}\\
        {\scriptstyle13}\\
        {\scriptstyle23}\\
        {\scriptstyle33}
        \end{array}\right.&
\!\!\!\!\!\!\!\!\!\!\left[\!\begin{array}{ccccccccc}
         1&         0&         0&         0&         0&         0&         0&         0&         0\\
         0&    \frac{1}{2}&         0&    \frac{1}{2}&         0&         0&         0&         0&         0\\
         0&         0&    \frac{1}{2}&         0&         0&         0&    \frac{1}{2}&         0&         0\\
         0&    \frac{1}{2}&         0&    \frac{1}{2}&         0&         0&         0&         0&         0\\
         0&         0&         0&         0&    1&         0&         0&         0&         0\\
         0&         0&         0&         0&         0&    \frac{1}{2}&         0&    \frac{1}{2}&         0\\
         0&         0&    \frac{1}{2}&         0&         0&         0&    \frac{1}{2}&         0&         0\\
         0&         0&         0&         0&         0&    \frac{1}{2}&         0&    \frac{1}{2}&         0\\
         0&         0&         0&         0&         0&         0&         0&         0&    1
         \end{array}\!\right]\!\!
     \end{array}\right.\!\!\!\!,
  \eean
  \bean
     \F^3=\!\!\!\!\!\!\!
     \left.\begin{array}{rl}
        &
        \left.\begin{array}{ccccccccc}
        \!\!\!\!\!\!\!\!{\scriptstyle11}&~~\!{\scriptstyle21}&~~{\scriptstyle31}&~~{\scriptstyle12}&~~\!{\scriptstyle22}&~~\!{\scriptstyle32}&~~{\scriptstyle13}&~~{\scriptstyle23}&~~\!{\scriptstyle33}\end{array}\right.\\
        \left.\begin{array}{c}
        {\scriptstyle11}\\
        {\scriptstyle21}\\
        {\scriptstyle31}\\
        {\scriptstyle12}\\
        {\scriptstyle22}\\
        {\scriptstyle32}\\
        {\scriptstyle13}\\
        {\scriptstyle23}\\
        {\scriptstyle33}
        \end{array}\right.&
\!\!\!\!\!\!\!\!\!\!\left[\begin{array}{rrrrrrrrr}
         0&         0&         0&         0&         0&         0&         0&         0&         0\\
         0&    \frac{1}{2}&         0&   -\frac{1}{2}&         0&         0&         0&         0&         0\\
         0&         0&    \frac{1}{2}&         0&         0&         0&   -\frac{1}{2}&         0&         0\\
         0&   -\frac{1}{2}&         0&    \frac{1}{2}&         0&         0&         0&         0&         0\\
         0&         0&         0&         0&         \phantom{-}0&         0&         0&         0&         0\\
         0&         0&         0&         0&         0&    \frac{1}{2}&         0&   -\frac{1}{2}&         0\\
         0&         0&   -\frac{1}{2}&         0&         0&         0&    \frac{1}{2}&         0&         0\\
         0&         0&         0&         0&         0&   -\frac{1}{2}&         0&    \frac{1}{2}&         0\\
         0&         0&         0&         0&         0&         0&         0&         0&         \phantom{-}0
         \end{array}\right]\!\!
     \end{array}\right.\!\!\!\!,\quad Z=\left[\begin{array}{ccc}
11&12&13\\
21&22&23\\
31&32&33\end{array}\right]\!\!\cdot
\eean
The column and row labels denote the labels of the entries of $Z$ in $\vec(Z)$.

Next, we define three selection and two associated matrices. We also establish a comprehensive list of their properties, some of which we will be using and others for completeness since they may be of general interest.
\begin{definition}\label{def:diagonal-lower-upper}
Let $Z\in\CC^{{n}\times {n}}$ and let $z=\vec(Z)\in\CC^{{n}^2}$. Let $d(Z)\in\CC^{n}$ and $l(Z)\in\CC^{{n}({n}-1)/2}$ denote the vectors obtained from $z$ by keeping only the diagonal and strictly lower triangular elements of $Z$, in the same order they occur in $z$, respectively, and let $u(Z)\in\CC^{{n}({n}-1)/2}$ be the vector obtained from $z$ by keeping only the strictly upper triangular elements of $Z$, in the same order they occur in $\K^{n} z$.
\end{definition}

\begin{lemma}\label{lem:Selection}
Let the diagonal, lower triangular and upper triangular selection matrices $\S^{n}_D\!\in\!\RR^{{n}\times {n}^2}$, $\S^{n}_L\!\in\!\RR^{\frac{{n}({n}-1)}{2}\times {n}^2}$ and $\S^{n}_U\!\in\!\RR^{\frac{{n}({n}-1)}{2}\times {n}^2}$ be defined such that
\begin{enumerate}[resume*=prop]
\item $\quad\left[\begin{array}{c}\S^{n}_D\\\S^{n}_L\\\S^{n}_U\end{array}\right]\vec(Z)=\left[\begin{array}{c}d(Z)\\l(Z)\\u(Z)\end{array}\right]\!\!\cdot$\label{itm:P8}
\end{enumerate}
Let $e^p_i$ denote the $i$-th column of $I_p$ and for each $1\!\le\! j\!<\!i\!\le\! {n}$ let $k_{ij}\!=\!(j\!-\!1){n}\!+\!i\!-\!\frac{j(j\!+\!1)}{2}$. Explicit definitions for the selection matrices are as follows:
\bean
\S^{n}_D&=&\sum_{i=1}^{n}e^{n}_i\otimes{e^{n}_i}^{T}\otimes {e^{n}_i}^{T}\!\!,\\
\S^{n}_L&=&\sum_{j=1}^{{n}-1}\sum_{i=j+1}^{n}e^{{n}({n}\!-\!1)/2}_{k_{ij}}\otimes {e^{n}_j}^{T}\otimes {e^{n}_i}^{T}\!\!,\\
\S^{n}_U&=&\sum_{j=1}^{{n}-1}\sum_{i=j+1}^{n}e^{{n}({n}\!-\!1)/2}_{k_{ij}}\otimes {e^{n}_i}^{T}\otimes {e^{n}_j}^{T}\!.
\eean
Then, the following properties are satisfied:
\begin{enumerate}[resume*=prop]
    \item \label{itm:P9}$\quad P^{n}:=\!\left[\!\!\!\begin{array}{c}\S^{n}_D\\\S^{n}_L\\\S^{n}_U\end{array}\!\!\!\right]\!$ is a permutation matrix satisfying
\bean
{P^{n}}^TP^{n}&=&{\S^{n}_D}^{\!T}\!\S^{n}_D\!+\!{\S^{n}_L}^{\!T}\!\S^{n}_L\!+\!{\S^{n}_U}^{\!T}\!\S^{n}_U\!=\!I_{{n}^2},\\
P^{n}{P^{n}}^T&=&\left[\!\!\!\begin{array}{c}\S^{n}_D\\\S^{n}_L\\\S^{n}_U\end{array}\!\!\!\right]\left[\begin{array}{ccc}{\S^{n}_D}^{\!T}\!&{\S^{n}_L}^{\!T}\!&{\S^{n}_U}^{\!T}\!\end{array}\right]=\left[\begin{array}{ccc}I_{n}&0&0\\0&I_{{n}({n}-1)/2}&0\\0&0&I_{{n}({n}-1)/2}\end{array}\right]\!\!.
\eean
    \item \label{itm:P10}$\quad {\S^{n}_D}^{\!T}\!\S^{n}_D = I_D$, ${\S^{n}_L}^{\!T}\!\S^{n}_L = I_L$ and ${\S^{n}_U}^{\!T}\!\S^{n}_U = I_U$ where $I_D, I_L$ and $I_U$ are ${n}^2\times {n}^2$ diagonal matrices with zeros on the diagonals except for ones at the locations corresponding to the diagonal, strictly lower triangular and strictly upper triangular elements in $\vec(Z)$, respectively.
    \item \label{itm:P11}$\quad \left[\begin{array}{c}\S^{n}_D\\\S^{n}_L\\\S^{n}_U\end{array}\right]\K^{n}=\left[\begin{array}{c}\S^{n}_D\\\S^{n}_U\\\S^{n}_L\end{array}\right]\!\!\cdot$
    \item \label{itm:P12}$\quad \K^{n} ={\S^{n}_D}^{\!T}\!\S^{n}_D + {\S^{n}_L}^{\!T}\!\S^{n}_U + {\S^{n}_U}^{\!T}\!\S^{n}_L.$
    \item \label{itm:P13}$\quad \S^{n}_D\left[\begin{array}{cc}{\cH^{n}}&{\F^{n}}\end{array}\right] = \S^{n}_D\left[\begin{array}{cc}I_{{n}^2}&0\end{array}\right]\!$.
    \item \label{itm:P14}$\quad \S^{n}_L\left[\begin{array}{cc}{\cH^{n}}&{\F^{n}}\end{array}\right]=\S^{n}_U\left[\begin{array}{cc}{\cH^{n}}&-{\F^{n}}\end{array}\right]=\frac{1}{2}\left[\begin{array}{cc}\S^{n}_L+\S^{n}_U&\S^{n}_L-\S^{n}_U\end{array}\right]\!.$
    \end{enumerate}
Furthermore, define the associated matrices
\be\label{eqn:SnTn}
\S^{n}\!:= \!\!\matTwoTwo{\!\!\matTwoOne{\S^{n}_D}{\!\!\sqrt{2}S^{n}_L}\!\!}{\!\!0\!\!}{\!\!0\!\!}{\!\!\sqrt{2}\S^{n}_U\!\!}\!\!\in\!\RR^{{n}^2\times2n^2}\!\!\!,
\quad
\T^{{n}} \!:= \!\!\left[\!\!\begin{array}{c}\S^{n}_D\\\frac{1}{\sqrt{2}}(\S^{n}_L\!+\!\S^{n}_U)\\\hline\frac{1}{\sqrt{2}}(\S^{n}_L\!-\!\S^{n}_U)\end{array}\!\!\right]\!\!=:\!\!\left[\!\!\begin{array}{c}\V^n\\\hline\U^n\end{array}\!\!\right]\!\!\in\!\RR^{{n}^2\times {n}^2}\!\!\!.
\ee
Then the following properties are satisfied:
\begin{enumerate}[resume*=prop]
    \item $\quad \T^{n}=\left[\begin{array}{c}\V^n\\\hline\U^n\end{array}\right]=\S^{n}\left[\begin{array}{c}{\cH^{n}}\\-{\F^{n}}\end{array}\right]\!=\left[\begin{array}{c}\left[\begin{array}{c}\S^{n}_D\\\sqrt{2}\S^{n}_L\end{array}\right]{\cH^{n}}\\\hline-\sqrt{2}\S^{n}_U{\F^{n}}\end{array}\right]\!\!\cdot$\label{itm:P15}
    \item $\quad \T^{n}{\T^{n}}^T={\T^{n}}^T\T^{n}=I_{{n}^2}$\label{itm:P16}.
\end{enumerate}
\end{lemma}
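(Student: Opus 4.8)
The plan is to prove Lemma~\ref{lem:Selection} in three stages: confirm that the explicit sums realise Property~\ref{itm:P8}, then establish Properties~\ref{itm:P9}--\ref{itm:P14}, and finally deduce Properties~\ref{itm:P15}--\ref{itm:P16}. Everything rests on the elementary identity $\K^{n}(e^{n}_a\otimes e^{n}_b)=e^{n}_b\otimes e^{n}_a$ (equivalently $(e^{n}_a\otimes e^{n}_b)^{T}\K^{n}=(e^{n}_b\otimes e^{n}_a)^{T}$) together with $({e^{n}_j}^{T}\otimes{e^{n}_i}^{T})\vec(Z)={e^{n}_i}^{T}Ze^{n}_j=Z_{ij}$. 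For the first stage: applying the $i$-th block row of $\sum_i e^{n}_i\otimes{e^{n}_i}^{T}\otimes{e^{n}_i}^{T}$ to $\vec(Z)$ returns $Z_{ii}e^{n}_i$, so that sum equals $\S^{n}_D$; the rows $({e^{n}_j}^{T}\otimes{e^{n}_i}^{T})$ with $j<i$ select the strictly lower entries $Z_{ij}$, and the rows $({e^{n}_i}^{T}\otimes{e^{n}_j}^{T})$ with $j<i$ select $Z_{ji}$, which are exactly the strictly upper entries of $Z$ in the order they occur in $\K^{n}\vec(Z)=\vec(Z^{T})$; hence those sums equal $\S^{n}_L$ and $\S^{n}_U$. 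The only point needing care is that $k_{ij}=(j-1)n+i-\tfrac{j(j+1)}{2}$ is a bijection from $\{1\le j<i\le n\}$ onto $\{1,\dots,n(n-1)/2\}$ respecting the $\vec(\cdot)$ ordering, which is a direct index count.

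For the second stage, the $n^2$ rows of $P^{n}$ are precisely the vectors $(e^{n}_a\otimes e^{n}_b)^{T}$ over all $(a,b)$ --- the case $a=b$ coming from $\S^{n}_D$, $a>b$ from $\S^{n}_L$, $a<b$ from $\S^{n}_U$ --- each occurring exactly once, so $P^{n}$ is a permutation matrix and ${P^{n}}^{T}P^{n}=P^{n}{P^{n}}^{T}=I_{n^2}$; reading off the $3\times3$ block structure of $P^{n}{P^{n}}^{T}$ (orthonormal rows with pairwise disjoint supports) gives the identities in Property~\ref{itm:P9}, and the diagonal-block content of ${P^{n}}^{T}P^{n}=I_{n^2}$ is Property~\ref{itm:P10}. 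Property~\ref{itm:P11} holds because right multiplication by $\K^{n}$ sends $(e^{n}_a\otimes e^{n}_b)^{T}$ to $(e^{n}_b\otimes e^{n}_a)^{T}$, which fixes each diagonal row of $\S^{n}_D$ and, for $j<i$, interchanges the $k_{ij}$-th rows of $\S^{n}_L$ and $\S^{n}_U$. Property~\ref{itm:P12} follows by splitting $\K^{n}=\sum_{a,b}(e^{n}_a\otimes e^{n}_b)(e^{n}_b\otimes e^{n}_a)^{T}$ into the terms $a=b$, $a>b$, $a<b$, which equal ${\S^{n}_D}^{T}\S^{n}_D$, ${\S^{n}_U}^{T}\S^{n}_L$, ${\S^{n}_L}^{T}\S^{n}_U$ respectively. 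Finally, substituting $\cH^{n}=\tfrac12(I_{n^2}+\K^{n})$ and $\F^{n}=\tfrac12(I_{n^2}-\K^{n})$ and using $\S^{n}_D\K^{n}=\S^{n}_D$, $\S^{n}_L\K^{n}=\S^{n}_U$, $\S^{n}_U\K^{n}=\S^{n}_L$ from Property~\ref{itm:P11} yields Properties~\ref{itm:P13} and~\ref{itm:P14} immediately.

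For the third stage, in Property~\ref{itm:P15} the last equality is just the block product of the $2\times2$ block form of $\S^{n}$ against the stacked pair of blocks $\cH^{n}$ and $-\F^{n}$, and the middle equality reduces, block-row by block-row, to $\S^{n}_D\cH^{n}=\S^{n}_D$, $\sqrt{2}\,\S^{n}_L\cH^{n}=\tfrac{1}{\sqrt2}(\S^{n}_L+\S^{n}_U)$ and $-\sqrt{2}\,\S^{n}_U\F^{n}=\tfrac{1}{\sqrt2}(\S^{n}_L-\S^{n}_U)$, all contained in Properties~\ref{itm:P13}--\ref{itm:P14}. For Property~\ref{itm:P16}, write $\T^{n}$ as the stack of $\S^{n}_D$, $\tfrac{1}{\sqrt2}(\S^{n}_L+\S^{n}_U)$ and $\tfrac{1}{\sqrt2}(\S^{n}_L-\S^{n}_U)$ and expand $\T^{n}{\T^{n}}^{T}$ as a $3\times3$ block matrix: using $\S^{n}_D{\S^{n}_D}^{T}=I_n$, $\S^{n}_L{\S^{n}_L}^{T}=\S^{n}_U{\S^{n}_U}^{T}=I_{n(n-1)/2}$ and the vanishing of all cross products from Property~\ref{itm:P9}, every diagonal block collapses to an identity and every off-diagonal block to zero, so $\T^{n}{\T^{n}}^{T}=I_{n^2}$; since $\T^{n}$ is square, ${\T^{n}}^{T}\T^{n}=I_{n^2}$ then follows automatically.

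I do not anticipate a genuine obstacle: once the identity $\K^{n}(e^{n}_a\otimes e^{n}_b)=e^{n}_b\otimes e^{n}_a$ is in hand the entire lemma is bookkeeping. The two places most prone to slips are the index arithmetic in the first stage (verifying that $k_{ij}$ is the correct position inside $\vec(\cdot)$) and keeping the block partitions of $\S^{n}$ and $\T^{n}$ mutually consistent in the last stage.
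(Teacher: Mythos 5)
Your proposal is correct and follows essentially the same route as the paper: both reduce everything to the action of $\K^{n}$ on Kronecker products of standard basis vectors, the fact that the rows of $P^{n}$ are distinct standard basis vectors (hence a permutation matrix), and direct block computation for \ref{itm:P15}--\ref{itm:P16}. The only cosmetic difference is that you obtain \ref{itm:P12} by splitting the defining sum of $\K^{n}$ into the cases $a=b$, $a>b$, $a<b$, whereas the paper gets it by pre-multiplying \ref{itm:P11} by ${P^{n}}^{T}$ and invoking \ref{itm:P9}; these are equivalent one-line arguments.
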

\begin{proof}
Property~\ref{itm:P8} expresses the implicit definitions of the selection matrices. The explicit definitions of $\S^{n}_D$, $\S^{n}_L$ and $\S^{n}_U$ are slight modifications to those of the elimination matrix in \cite{Magnus1980} which selects the lower triangular (rather than the strictly lower) part of $Z$. Property~\ref{itm:P9} follows since $P^{n}z$ is simply a rearrangement of the elements of $z$. We prove the second statement of Property~\ref{itm:P10} as the proof of the others is similar. Starting from the explicit definition of $\S^{n}_L$:
\begin{eqnarray*}
    {\S^{n}_L}^{T}S^{n}_L &=&
    \sum_{q=1}^{{n}-1}
    \sum_{p=q+1}^{n}
    \sum_{j=1}^{{n}-1}
    \sum_{i=j+1}^{n}\!
    \left(\!
        \left(e^{{n}({n}\!-\!1)/2}_{k_{pq}}\right)^{T}
        \!\otimes e^{n}_q
        \otimes e^{n}_p
    \right)\!
    \left(
        e^{{n}({n}\!-\!1)/2}_{k_{ij}}
        \otimes {e^{n}_j}^{T}\!
        \otimes {e^{n}_i}^{T}
    \right)\\
    &=&
    \sum_{q=1}^{{n}-1}
    \sum_{p=q+1}^{n}
    \sum_{j=1}^{{n}-1}
    \sum_{i=j+1}^{n}
    \!\left(
        \left(e^{{n}({n}\!-\!1)/2}_{k_{pq}}\right)^{T}\! e^{{n}({n}\!-\!1)/2}_{k_{ij}}
    \right)
    \otimes
    \left(
        e^{n}_q \otimes e^{n}_p
    \right)\!\!
    \left(
        {e^{n}_j}^{T} \!\!\otimes {e^{n}_i}^{T}
    \right).
\end{eqnarray*}
The first term in the summation is a scalar and is equal to $1$ if and only if $k_{pq}=k_{ij}$, and zero otherwise. Setting $p=i$ and $q=j$ we get
\begin{eqnarray*}
    {\S^{n}_L}^{T}\!S^{n}_L &=&
    \sum_{j=1}^{{n}-1}
    \sum_{i=j+1}^{n}
    \!\left(
        e^{n}_j \otimes e^{n}_i
    \right)\!
    \left(
        {e^{n}_j}^{T} \!\otimes {e^{n}_i}^{T}
    \right)    =
    \sum_{j=1}^{{n}-1}
    \sum_{i=j+1}^{n}
        \!\left(e^{{n}^2}_{k_{ij}}\right)
        \!\left({e^{{n}^2}_{k_{ij}}}\right)^{T}\!,
\end{eqnarray*}
which is a diagonal matrix with ones at the $k_{ij}$ locations along the diagonal, and zero otherwise, which proves the property. Property~\ref{itm:P11} states that the diagonal elements of $Z$ and $Z^T$ are the same and that the strictly lower (resp., upper) triangular elements of $Z$ are the same as the strictly upper (resp., lower) triangular elements of $Z^T$. Property~\ref{itm:P12} (which follows by pre-multiplying Property~\ref{itm:P11} by $\left[\!\!\begin{array}{ccc}{\S^{n}_D}^{\!T}&{\S^{n}_L}^{\!T}&{\S^{n}_U}^{\!T}\end{array}\!\!\right]$ and using Property~\ref{itm:P9}) 
shows that $\K^{n}$ can be defined in terms of the selection matrices: to transpose a square matrix, we swap the lower and upper triangular parts. That is, we select the different parts, swap the lower and upper parts and then undo the permutation, so that,
\bean
    \K^{n}\!=\!\!\matOneThr{{\S^{n}_D}^{\!T}\!\!}{{\S^{n}_L}^{\!T}\!\!}{\!\!{\S^{n}_U}^{\!T}\!\!}\!\!\!\matThrThr{I_{n}}{0}{0}{0}{0}{I_{{n}({n}\!-\!1)/2}}{0}{I_{{n}({n}\!-\!1)/2}}{0}
    \!\!\!\!\matThrOne{\S^{n}_D}{\S^{n}_L}{\S^{n}_U}\!\!\cdot
\eean
Property~\ref{itm:P13} states that the diagonal elements of $Z$ and $\frac{1}{2}(Z+Z^T)$ are the same and that the diagonal elements of $\frac{1}{2}(Z-Z^T)$ are zero. Property~\ref{itm:P14} states that the lower and upper triangular elements of $\frac{1}{2}(Z+Z^T)$ are the same while those of $\frac{1}{2}(Z-Z^T)$ are the negatives of each other. Property~\ref{itm:P15} follows from Properties~\ref{itm:P13} and \ref{itm:P14} and direct evaluation. Finally, Property~\ref{itm:P16} follows from Property~\ref{itm:P9} and a direct evaluation using the expression for $\T^{n}$ in \eqref{eqn:SnTn}. 
\end{proof}

For illustration, we give the selection matrices for ${n}=4$ with the entries of $Z$ being the labels for clarity:
$$
Z=\left[\begin{array}{cccc}
11&12&13&14\\
21&22&23&24\\
31&32&33&34\\
41&42&43&44\end{array}\right]\!\!,
$$
    \bean
        \left[\begin{array}{c}\S^{n}_D\\\hline \S^{n}_L\\\hline \S^{n}_U\end{array}\right] =
        \left.\begin{array}{rl}
        &
        \left.\begin{array}{cccccccccccccccc}
        \!\!\!\!\!\!\!{\scriptstyle11}&\!\!{\scriptstyle21}&\!\!{\scriptstyle31}&\!\!{\scriptstyle41}&\!\!{\scriptstyle12}&\!{\scriptstyle22}&\!\!{\scriptstyle32}&\!\!{\scriptstyle42}&\!\!{\scriptstyle13}&\!{\scriptstyle23}&\!\!{\scriptstyle33}&\!\!{\scriptstyle43}&\!{\scriptstyle14}&\!\!{\scriptstyle24}&\!\!{\scriptstyle34}&\!\!{\scriptstyle44}\end{array}\right.\\
        \left.\begin{array}{c}
        {\scriptstyle11}\\
        {\scriptstyle22}\\
        {\scriptstyle33}\\
        {\scriptstyle44}\\\hline
        {\scriptstyle21}\\
        {\scriptstyle31}\\
        {\scriptstyle41}\\
        {\scriptstyle32}\\
        {\scriptstyle42}\\
        {\scriptstyle43}\\\hline
        {\scriptstyle12}\\
        {\scriptstyle13}\\
        {\scriptstyle14}\\
        {\scriptstyle23}\\
        {\scriptstyle24}\\
        {\scriptstyle34}
        \end{array}\right.&
        \!\!\!\!\!\!\!\!\!\left[\begin{array}{cccccccccccccccc}
        1&0&0&0&0&0&0&0&0&0&0&0&0&0&0&0\\
        0&0&0&0&0&1&0&0&0&0&0&0&0&0&0&0\\
        0&0&0&0&0&0&0&0&0&0&1&0&0&0&0&0\\
        0&0&0&0&0&0&0&0&0&0&0&0&0&0&0&1\\\hline
        0&1&0&0&0&0&0&0&0&0&0&0&0&0&0&0\\
        0&0&1&0&0&0&0&0&0&0&0&0&0&0&0&0\\
        0&0&0&1&0&0&0&0&0&0&0&0&0&0&0&0\\
        0&0&0&0&0&0&1&0&0&0&0&0&0&0&0&0\\
        0&0&0&0&0&0&0&1&0&0&0&0&0&0&0&0\\
        0&0&0&0&0&0&0&0&0&0&0&1&0&0&0&0\\\hline
        0&0&0&0&1&0&0&0&0&0&0&0&0&0&0&0\\
        0&0&0&0&0&0&0&0&1&0&0&0&0&0&0&0\\
        0&0&0&0&0&0&0&0&0&0&0&0&1&0&0&0\\
        0&0&0&0&0&0&0&0&0&1&0&0&0&0&0&0\\
        0&0&0&0&0&0&0&0&0&0&0&0&0&1&0&0\\
        0&0&0&0&0&0&0&0&0&0&0&0&0&0&1&0
        \end{array}\right]
        \end{array}\right.\!\!\!\!\!\!\cdot
    \eean
The column labels denote the labels of the entries of $Z$ in $\vec(Z)$. The row labels denote the labels of the entries of $Z$ in $d(Z)$, $l(Z)$ and $u(Z)$, respectively. 
\begin{remark}
It is straightforward to use the examples above to illustrate the properties enumerated in Lemmas~\ref{lem:commutation} and \ref{lem:Selection}. Note that a combination of $\S^{n}_D$ and $S^{n}_L$ in different order was introduced in \cite{Magnus1980} as the
elimination matrix as it eliminates the strictly upper triangular part. For our purposes, we separate them and introduce $\S^{n}_U$.
\end{remark}

The matrices $\K^{n},~{\cH^{n}},~{\F^{n}},~\S^{n}_D,~\S^{n}_L$ and $\S^{n}_U$ give us a set of transformations that we can apply to vectors of the form $\vec{(Z)}$ in order to effect the manipulations on the matrix $Z$ equivalent to transposing, decomposing into symmetric and skew-symmetric parts, and selecting the diagonal, strictly lower and strictly upper triangular parts, respectively. Using these tools, we are ready to state the main result of this section.
\begin{theorem}\label{thm:Diagonalisation}
Let ${m}$ and ${n}$ be integers and let all variables be as defined in Lemma~\ref{lem:commutation} and Lemma~\ref{lem:Selection}. For any $A,B\!\in\!\CC^{{m}\times{n}}$, let $\D\!:=\!A\otimes B\!-\!B\otimes A\!\in\!\CC^{{m}^2\times {n}^2}$ and $\Dt:=A\otimes B+B\otimes A\in\CC^{{m}^2\times {n}^2}$ be the Kronecker commutator and anti-commutator operators, respectively. 
Then 
\begin{enumerate}
\item $\D$ can be block anti-diagonalized using orthogonal transformations as follows
\bean
\T^{m}\D{\T^{n}}^T\!=\!\left[\!\begin{array}{c}\V^m\\\U^m\end{array}\!\right]\!\D\!\left[\!\begin{array}{cc}{\V^n}^T&\!{\U^n}^T\end{array}\!\right]\!=
\!\!\!\!\!\!\!
\left.\begin{array}{rl}&\!\!\!\!\!\!\!\!\!\!\begin{tiny}\left.\begin{array}{cc}\frac{{n}({n}+1)}{2}&\!\!\!\!\!\frac{ {n}({n}-1)}{2}\end{array}\right.\end{tiny}\\
\begin{tiny}\left.\begin{array}{r}\frac{{m}({m}+1)}{2}\\\frac{{m}({m}-1)}{2}\end{array}\right.\end{tiny}&
\!\!\!\!\!\!\!\!\left[\begin{array}{cc}0&\D_{12}\\\D_{21}&0\end{array}\right]\end{array}\right.\!\!\!\!\!\!,
\eean
and $\Dt:=A\otimes B+B\otimes A\in\CC^{{m}^2\times {n}^2}$ can be block diagonalized using the same orthogonal transformations as follows:
\bean
\T^{m}\Dt{\T^{n}}^T\!=\!\left[\!\begin{array}{c}\V^m\\\U^m\end{array}\!\right]\!\Dt\!\left[\!\begin{array}{cc}{\V^n}^T&\!{\U^n}^T\end{array}\!\right]
\!=\!\!\!\!\!\!\!
\left.\begin{array}{rl}&\!\!\!\!\!\!\!\!\!\!\begin{tiny}\left.\begin{array}{cc}\frac{{n}({n}+1)}{2}&\!\!\!\!\!\frac{ {n}({n}-1)}{2}\end{array}\right.\end{tiny}\\
\begin{tiny}\left.\begin{array}{r}\frac{{m}({m}+1)}{2}\\\frac{{m}({m}-1)}{2}\end{array}\right.\end{tiny}&
\!\!\!\!\!\!\!\!\left[\begin{array}{cc}\Dt_{11}&0\\0&\Dt_{22}\end{array}\right]\end{array}\right.\!\!\!\!\!\!,
\eean
where the partitioning of $\T^m$ and $\T^n$ is defined in \eqref{eqn:SnTn} and where
\bean
\left[\!\!\!\begin{array}{cc}\Dt_{11}&\!\!\!\D_{12}\\\D_{21}&\!\!\!\Dt_{22}\end{array}\!\!\!\right]\!\!=\!\!
\left[\!\!\!\begin{array}{cc}\V^m\Dt {\V^n}^T&\!\!\V^m\D{\U^n}^T\\\U^m\D {\V^n}^T&\!\!\U^m\Dt{\U^n}^T\end{array}\!\!\!\right]\!\!=\!\!
\left[\!\!\!\begin{array}{cc}2\V^m(A\otimes B){\V^n}^T&\!\!2\V^m(A\otimes B)\,{\U^n}^T\\2\,\U^m(A\otimes B){\V^n}^T&\!\!2\,\U^m(A\otimes B)\,{\U^n}^T\end{array}\!\!\!\right]\!\!\cdot
\eean
\item Suppose that the null space of $\D_{21}$ is nonempty. Then it includes a vector 
\be\label{eqn:Unique Symmetric}
y=\V^nx^{\otimes2},
\ee
for some $0\ne x\in\CC^n$ with ${\V^n}^Ty=x^{\otimes2}$.
\item $\D$ has full column rank if and only if $\D_{21}$ has full column rank.
\end{enumerate}
\end{theorem}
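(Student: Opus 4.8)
The plan is to prove the three claims in order, leaning on the symmetry machinery of Lemma~\ref{lem:commutation} and Lemma~\ref{lem:Selection} together with Theorem~\ref{thm:m=2-square}. For part~1, the block anti-diagonalization of $\D$, I would compute $\T^m\D{\T^n}^T$ by expanding $\T^m$ and $\T^n$ using Property~\ref{itm:P15}, namely $\T^n=\S^n\left[\begin{array}{c}{\cH^{n}}\\-{\F^{n}}\end{array}\right]$, so that the product becomes $\S^m\left[\begin{array}{c}{\cH^{m}}\\-{\F^{m}}\end{array}\right]\D\left[\begin{array}{cc}{\cH^{n}}&-{\F^{n}}\end{array}\right]{\S^n}^T$. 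The inner $2\times2$ block matrix $\left[\begin{array}{c}{\cH^{m}}\\{\F^{m}}\end{array}\right]\D\left[\begin{array}{cc}{\cH^{n}}&{\F^{n}}\end{array}\right]$ is exactly (a sub-block of) Property~\ref{itm:P7}, which tells us the $(1,1)$ and $(2,2)$ entries $\cH^m\D\cH^n$ and $\F^m\D\F^n$ vanish, while the off-diagonal entries are $2\cH^m(A\ot B)\F^n$ and $2\F^m(A\ot B)\cH^n$ (up to sign, which the $-{\F}$ sign in $\T$ absorbs). So $\D$ is anti-diagonal in this intermediate basis. Then the $\S^m,{\S^n}^T$ sandwiching, via Properties~\ref{itm:P13}--\ref{itm:P14}, restricts the symmetric part to the diagonal-plus-lower selection (the $\V$ block of dimension $\frac{m(m+1)}{2}$) and the skew part to the $\U$ block of dimension $\frac{m(m-1)}{2}$; the dimension count comes from $\rrank(\cH^n)=\frac{n(n+1)}{2}$ and $\rrank(\F^n)=\frac{n(n-1)}{2}$. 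The entries $\D_{12}=\V^m\D{\U^n}^T$, $\D_{21}=\U^m\D{\V^n}^T$, $\Dt_{11}=\V^m\Dt{\V^n}^T$, $\Dt_{22}=\U^m\Dt{\U^n}^T$ then follow by collecting terms, and the identities $\V^m\D{\U^n}^T=2\V^m(A\ot B){\U^n}^T$ etc.\ come from the observation that on the relevant symmetric/skew subspaces $\D$ and $\Dt$ differ only by the commutation action, which Property~\ref{itm:P6} converts into a sign. The anti-commutator claim is the mirror image: $\cH^m\Dt\F^n=0=\F^m\Dt\cH^n$ from Property~\ref{itm:P7}, giving block diagonal form with the same two blocks. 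The key identity $\T^n{\T^n}^T=I$ (Property~\ref{itm:P16}) guarantees these are genuine orthogonal similarity-type transformations, so ranks and null-space dimensions are preserved.

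For part~2, suppose $\D_{21}y=0$ for some $0\ne y$. Since $\T^m\D{\T^n}^T$ is block anti-diagonal and $\T^m,\T^n$ are orthogonal, the vector $\tilde z:={\T^n}^T\left[\begin{array}{c}y\\0\end{array}\right]$ satisfies $\D\tilde z=0$, and moreover $\tilde z$ is supported in the symmetric subspace: explicitly $\tilde z={\V^n}^Ty$, which by Property~\ref{itm:P15} equals ${\cH^n}\left[\begin{array}{c}\S^n_D\\\sqrt2\S^n_L\end{array}\right]^Ty$, so ${\cH^n}\tilde z=\tilde z$, i.e.\ $\tilde z$ is a symmetric vector and $0\ne\tilde z\in\Nc(\D)$. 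Now invoke Theorem~\ref{thm:m=2-square}: the null space of $\D$ being nonempty, it contains a strongly decomposable vector $x^{\otimes2}$ with $x\ne0$. Since $x^{\otimes2}$ corresponds to a rank-one (hence symmetric) matrix $xx^T$, we have ${\cH^n}x^{\otimes2}=x^{\otimes2}$, so $x^{\otimes2}$ lies in the symmetric subspace and therefore ${\T^n}x^{\otimes2}=\left[\begin{array}{c}\V^nx^{\otimes2}\\0\end{array}\right]$. Applying $\T^m\D{\T^n}^T$ to this and using $\D x^{\otimes2}=0$ with the block structure forces $\D_{21}\left(\V^nx^{\otimes2}\right)=0$. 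Hence $y:=\V^nx^{\otimes2}$ is in $\Nc(\D_{21})$, and ${\V^n}^Ty={\V^n}^T\V^nx^{\otimes2}=x^{\otimes2}$ because ${\V^n}^T\V^n$ acts as the identity on symmetric vectors (a consequence of Property~\ref{itm:P16} restricted to the symmetric block). This establishes \eqref{eqn:Unique Symmetric}.

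For part~3, the block anti-diagonal form from part~1 gives $\rrank(\D)=\rrank(\D_{12})+\rrank(\D_{21})$, so full column rank of $\D$ (i.e.\ rank $n^2$) would require $\rrank(\D_{12})=\frac{n(n-1)}{2}$ and $\rrank(\D_{21})=\frac{n(n+1)}{2}$, the latter meaning $\D_{21}$ has full column rank. Conversely, if $\D$ does \emph{not} have full column rank, then $\Nc(\D)\ne\{0\}$, and by Theorem~\ref{thm:m=2-square} it contains a strongly decomposable $x^{\otimes2}\ne0$; by the argument in part~2 this produces a nonzero element $\V^nx^{\otimes2}$ of $\Nc(\D_{21})$, so $\D_{21}$ does not have full column rank. (One should check $\V^nx^{\otimes2}\ne0$: if it were zero then $x^{\otimes2}={\V^n}^T\V^nx^{\otimes2}=0$, contradicting $x\ne0$.) Combining the two directions gives the equivalence. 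The main obstacle I anticipate is not the logical skeleton but the bookkeeping in part~1: correctly tracking the sign introduced by the $-{\F}$ convention in $\T^n$, verifying that the $\S^m(\cdot){\S^n}^T$ sandwiching produces exactly the claimed block sizes $\frac{m(m\pm1)}{2}$ and $\frac{n(n\pm1)}{2}$ without spurious overlap between the diagonal and lower-triangular selections, and confirming the factor-of-$2$ identities relating $\V^m\D{\U^n}^T$ to $2\V^m(A\ot B){\U^n}^T$ via Properties~\ref{itm:P6}--\ref{itm:P7}; these are routine but error-prone, and the cleanest route is to reduce everything to the already-tabulated Property~\ref{itm:P7} rather than re-deriving from $\K^m(A\ot B)$ directly.
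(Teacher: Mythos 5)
Your proposal is correct and follows essentially the same route as the paper: Property~\ref{itm:P15} to rewrite $\T^m\D{\T^n}^T$ as $\S^m\bigl[\begin{smallmatrix}\cH^m\\-\F^m\end{smallmatrix}\bigr]\D\bigl[\begin{smallmatrix}\cH^n&-\F^n\end{smallmatrix}\bigr]{\S^n}^T$ and Property~\ref{itm:P7} for the vanishing of $\cH^m\D\cH^n$ and $\F^m\D\F^n$ in part~1, then Theorem~\ref{thm:m=2-square} together with $\F^nx^{\otimes2}=0$ to produce $y=\V^nx^{\otimes2}$ in parts~2 and~3. Your only departures are presentational ones that the paper leaves implicit — spelling out why a nontrivial null vector of $\D_{21}$ lifts to one of $\D$, and using rank additivity of the anti-diagonal blocks for one direction of part~3 — and both are sound.
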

\begin{proof}~
\begin{enumerate}
\item We will prove the result for $\D$ only as the proof for $\Dt$ is similar. The orthogonality of $\T^{m}$ and $\T^{n}$ follows from Property~\ref{itm:P16}. Using Properties~\ref{itm:P1}, \ref{itm:P7} and \ref{itm:P15} we have
    \begin{eqnarray*}
        \T^{{m}}\D{\T^{n}}^T\! &\!=\!& \S^{m}\!\matTwoOne{\!\!\!{\cH^{{m}}}\!\!\!}{\!\!\!-{\F^{{m}}}\!\!\!}\!\D\!\matOneTwo{\!\!\!{\cH^{{n}}}}{\!\!-{\F^{{n}}}\!\!\!}\!{\S^n}^T\!\!=\!
        \S^{m}\!\matTwoTwo{\!\!0}{-{\cH^{{m}}}\D{\F^{{n}}}\!\!}{\!\!-{\F^{{m}}}\D{\cH^{{n}}}}{0\!\!}\!S_{n}^T\\
        \!&\!=\!&\!\matTwoTwo{\!\!\!\!\matTwoOne{\!\!\S^{m}_D\!\!}{\!\!\sqrt{2}S^{m}_L\!\!}}{\!\!\!\!\!\!0}{0}{\!\!\!\!\!\!\sqrt{2}\S^{m}_U\!\!}\!\!\matTwoTwo{\!\!0}{\!\!\!\!\!\!-{\cH^{{m}}}\D{\F^{{n}}}\!\!}{\!\!-{\F^{{m}}}\D{\cH^{{n}}}}{\!\!\!\!\!\!0\!\!}\!\!\matTwoTwo{\!\!\matTwoOne{\!\!\S^{n}_D\!\!}{\!\!\sqrt{2}S^{n}_L}\!\!}{\!\!0\!\!}{\!\!0}{\!\!\sqrt{2}\S^{n}_U\!\!}^T\!\!\!\!,
    \end{eqnarray*}
which gives the result by a direct evaluation using \eqref{eqn:SnTn}. The equality of the two expressions for both $\D_{21}$ and $\D_{12}$ follows from Property~\ref{itm:P7}. 
\item Suppose that $\D_{21}$ loses column rank. Then $\D$ loses column rank and it follows from Theorem~\ref{thm:m=2-square} that there exists $0\!\ne\! x\!\in\!\CC^n$ such that $\D x^{\otimes2}\!=\!0$. Therefore,
\bea\nonumber
\D x^{\otimes2}\!=\!0&\Rightarrow& \T^m\D{\T^n}^T\T^n x^{\otimes2}\!=\!0\Rightarrow\!\left[\!\!\begin{array}{cc}0&\D_{12}\\\D_{21}&0\end{array}\!\!\right]\!\!\left[\!\!\begin{array}{c}\matTwoOne{\!\!\S^{n}_D\!\!}{\!\!\sqrt{2}S^{n}_L\!\!}\cH^n x^{\otimes2}\\-\sqrt{2}\S^{n}_U \F^nx^{\otimes2}\end{array}\!\!\right]\!\!=\!0\\\label{eqn:D21losesrank}
&\Rightarrow&\left[\!\!\begin{array}{cc}0&\!\!\!\D_{12}\\\D_{21}&\!\!\!0\end{array}\!\!\right]\!\!\left[\!\!\begin{array}{c}\V^n x^{\otimes2}\\0\end{array}\!\!\right]\!\!=\!0\Rightarrow \D_{21}\V^n x^{\otimes2}\!=\!0,
\eea
where we used the fact that since $x^{\otimes2}$ is symmetric, $\F^nx^{\otimes2}\!=\!0$. This proves the first equality in \eqref{eqn:Unique Symmetric} by defining $y\!=\!\V^nx^{\otimes2}\!\ne\!0$. The second equality follows by pre-multiplying the first by $ {\V^n}^T\!$ and the orthogonality of $\T^n$. 
\item It is clear that $\D_{21}$ has full column rank if $\D$ has full column rank. Let $\D_{21}$ have full column rank and suppose on the contrary that $\D$ loses column rank. It follows from Theorem~\ref{thm:m=2-square} there exists $0\!\ne\! x\!\in\!\CC^n$ such that $\D x^{\otimes2}\!=\!0$. This implies that $\D_{21}$ loses column rank from \eqref{eqn:D21losesrank}, thus proving the result.
\end{enumerate}
\end{proof}
\begin{remark}
While the permutation approach in \cite{Alsubaie2019} can be adapted to give a proof of the theorem, we have opted for the selection matrices approach as it is more specific to the two-parameter MPP and is more relevant to the familiar Kronecker commutator and anti-commutator operators. While we only use Kronecker commutator operators in this work, we have included the corresponding results for anti-commutator operators for completeness and since they may be of interest in other fields.
\end{remark}

\section{Deflation Scheme for the Two-Parameter MPP}\label{sec:Deflation}

In this section, we use the Kronecker commutator properties derived in Section~\ref{sec:Commutator} to present a deflation scheme for the one-parameter MPPs in \eqref{eqn:1-Parameter} by removing redundant equations and exploiting the special structure of the matrices and eigenvector.

Theorem~\ref{thm:Diagonalisation} shows that the rank properties of the $m^2\times n^2$ Kronecker commutator operator $\D$ are effectively captured by the rank properties of the $\frac{m(m-1)}{2}\times\frac{n(n+1)}{2}$ anti-diagonal block $\D_{21}$. The next result uses this to deflate the $m^2\times n^2$ one-parameter MPPs in Theorem~\ref{thm:1-Parameter} to equivalent $\frac{m(m-1)}{2}\times\frac{n(n+1)}{2}$ MPPs.
\begin{thm}\label{thm:1-Parameter-Compressed}Let all variables be as defined in Theorem~\ref{thm:1-Parameter} and Theorem~\ref{thm:Diagonalisation} and let $\tilde{m}=\frac{m(m-1)}{2}$ and $\tilde{n}=\frac{n(n+1)}{2}$. Define the Kronecker determinants
\begin{eqnarray}\label{Gammas}
    \G_0 = 2\,\U^m(A_1\!\otimes\! A_2){\V^n}^T = \U^m\D_0{\V^n}^T && \nonumber \\
    \G_1 = 2\,\U^m(A_2\!\otimes\! A_0){\V^n}^T = \U^m\D_1{\V^n}^T && \\
    \G_2 = 2\,\U^m(A_0\!\otimes\! A_1){\V^n}^T = \U^m\D_2{\V^n}^T && \nonumber
\end{eqnarray}
so that $\G_i\in\CC^{\tilde{m}\times\tilde{n}}$ for $i=0,1,2$. Then the ${m}^2\times {n}^2$ one-parameter MPPs in \eqref{eqn:1-Parameter} have a simultaneous solution (equivalently, the ${m}\times {n}$ two-parameter MPP in Problem~\ref{prob:m=2} has a solution) if and only if the following three $\tilde{m}\times\tilde{n}$ one-parameter MPPs
\be\label{eqn:1-Parameter-Compressed}
(\l_i\G_j\!-\!\l_j\G_i)y\!=\!0,~(i,j)\in\{(0,1),(0,2),(1,2)\},\quad0\!\ne\!\left[\!\!\begin{array}{c}\l_0\\\l_1\\\l_2\end{array}\!\!\right]\!\!\in\!\CC^3,\quad
 0\!\ne\! y\!\in\!\CC^{\tilde{n}}\!,
\ee
have a simultaneous solution.
\end{thm}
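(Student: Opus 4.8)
The plan is to deduce Theorem~\ref{thm:1-Parameter-Compressed} from Theorem~\ref{thm:1-Parameter} by applying Theorem~\ref{thm:Diagonalisation} to each of the three Kronecker commutator operators appearing in \eqref{eqn:1-Parameter}, and by tracking a strongly decomposable eigenvector through the orthogonal change of basis $\T^m,\T^n$. The crucial observation is that the one-parameter MPP in \eqref{eqn:1-Parameter} involves the pencil $\l_i\D_j-\l_j\D_i$, which is \emph{itself} a Kronecker commutator: indeed $\l_i\D_j-\l_j\D_i = A\otimes B - B\otimes A$ with $A=\l_iA_j-\l_jA_i$ and $B=A_k$ (for the appropriate third index $k$); alternatively one checks directly from \eqref{eqn:Dis} that it is a linear combination of the $\D$'s each of which is a commutator, and commutators of fixed dimension are closed under the relevant bilinear manipulations so that $\T^m(\l_i\D_j-\l_j\D_i){\T^n}^T$ is block anti-diagonal with lower-left block $\l_i(\U^m\D_j{\V^n}^T)-\l_j(\U^m\D_i{\V^n}^T) = \l_i\G_j-\l_j\G_i$, using the definitions \eqref{Gammas} and the second expression for $\D_{21}$ in Theorem~\ref{thm:Diagonalisation}.

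The forward direction: suppose \eqref{eqn:1-Parameter} has a simultaneous solution. By Theorem~\ref{thm:1-Parameter} there is a strongly decomposable $z=x^{\otimes2}\ne0$ and $(\l_0,\l_1,\l_2)\ne0$ satisfying all three equations. Fix a pair $(i,j)$. Since $\l_i\D_j-\l_j\D_i$ is a Kronecker commutator of the form $A\otimes B-B\otimes A$, part~1 of Theorem~\ref{thm:Diagonalisation} gives $\T^m(\l_i\D_j-\l_j\D_i){\T^n}^T$ block anti-diagonal with lower-left block $\l_i\G_j-\l_j\G_i$. Applying the orthogonal matrix $\T^m$ to $(\l_i\D_j-\l_j\D_i)x^{\otimes2}=0$ and inserting ${\T^n}^T\T^n=I$, exactly as in \eqref{eqn:D21losesrank} — where one uses $\F^nx^{\otimes2}=0$ because $x^{\otimes2}$ is symmetric, so that $\T^nx^{\otimes2}=\left[\begin{smallmatrix}\V^nx^{\otimes2}\\0\end{smallmatrix}\right]$ — yields $(\l_i\G_j-\l_j\G_i)\V^nx^{\otimes2}=0$. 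Setting $y=\V^nx^{\otimes2}$, which is nonzero since ${\V^n}^Ty=x^{\otimes2}\ne0$ and $y$ is independent of the pair $(i,j)$, gives a simultaneous solution of \eqref{eqn:1-Parameter-Compressed}.

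The converse: suppose \eqref{eqn:1-Parameter-Compressed} has a simultaneous solution $(\l_0,\l_1,\l_2)\ne0$, $0\ne y\in\CC^{\tilde n}$. As in the proof of Theorem~\ref{thm:1-Parameter} it suffices to show one of the $m^2\times n^2$ MPPs in \eqref{eqn:1-Parameter} is solvable — take the pair $(i,j)$ for which $\l_i\G_j-\l_j\G_i$ is chosen; then $\l_i\G_j-\l_j\G_i$ is the lower-left anti-diagonal block $\D_{21}$ of the commutator $\l_i\D_j-\l_j\D_i$, and it loses column rank (it has a nontrivial null vector $y$). By part~3 of Theorem~\ref{thm:Diagonalisation}, $\l_i\D_j-\l_j\D_i$ also loses column rank, i.e. $(\l_i\D_j-\l_j\D_i)z=0$ for some $0\ne z\in\CC^{n^2}$; this is precisely the $(i,j)$ MPP of \eqref{eqn:1-Parameter} being solvable. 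By Theorem~\ref{thm:1-Parameter} all three MPPs in \eqref{eqn:1-Parameter} then have a simultaneous solution, equivalently Problem~\ref{prob:m=2} is solvable. Note one must double-check that the degenerate case $\l_i=\l_j=0$ (with $\l_k\ne0$) is harmless: then the pencil $\l_i\G_j-\l_j\G_i$ is identically zero, which trivially has a nontrivial null space, and separately $\l_kA_k$ must be handled — but this case is already covered by the $A_kx=0$ branch appearing in the proof of Theorem~\ref{thm:1-Parameter}, so choosing the pair $(i,j)$ so that $(\l_i,\l_j)\ne0$ (possible since $(\l_0,\l_1,\l_2)\ne0$) avoids the issue entirely.

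\textbf{Main obstacle.} The only real subtlety is the bookkeeping establishing that $\T^m(\l_i\D_j-\l_j\D_i){\T^n}^T$ has lower-left block exactly $\l_i\G_j-\l_j\G_i$ and upper-right block the corresponding combination of anti-commutator blocks — i.e. that the block anti-diagonalization of Theorem~\ref{thm:Diagonalisation} is \emph{linear} in the pair $(A,B)\mapsto A\otimes B-B\otimes A$. This is immediate once one writes $\l_i\D_j-\l_j\D_i$ in the form $A\otimes B-B\otimes A$ with $A=\l_iA_j-\l_jA_i$, $B=A_k$ (for $\{i,j,k\}=\{0,1,2\}$, with the correct sign from \eqref{eqn:Dis}), so that Theorem~\ref{thm:Diagonalisation} applies verbatim and $\D_{21}=2\U^m((\l_iA_j-\l_jA_i)\otimes A_k){\V^n}^T = \l_i\G_j-\l_j\G_i$; everything else is a direct substitution into \eqref{eqn:D21losesrank}.
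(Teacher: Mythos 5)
Your proof is correct, and the forward direction (inflated $\Rightarrow$ deflated) is essentially the paper's: both pass the strongly decomposable eigenvector $x^{\otimes2}$ from Theorem~\ref{thm:1-Parameter} through $\T^n$, use $\F^nx^{\otimes2}=0$ to kill the skew block, and read off $y=\V^nx^{\otimes2}$. Where you genuinely diverge is the converse. The paper argues directly: it sets $z={\T^n}^T\left[\begin{smallmatrix}y\\0\end{smallmatrix}\right]={\V^n}^Ty\ne0$ and verifies by block multiplication that this single $z$ satisfies all three inflated pencils simultaneously, with the same $\l$. You instead pick one pair $(i,j)$ with $(\l_i,\l_j)\ne(0,0)$, invoke part~3 of Theorem~\ref{thm:Diagonalisation} to conclude that the commutator $\l_i\D_j-\l_j\D_i$ loses column rank, and then re-run the machinery of Theorem~\ref{thm:1-Parameter} (one solvable pencil implies all three simultaneously solvable) to finish. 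Both routes are valid; the paper's is shorter and hands you the explicit simultaneous eigenvector $z={\V^n}^Ty$ with the \emph{given} $\l$, whereas yours only asserts existence of some simultaneous solution, which is all the theorem claims, but loses the explicit correspondence $y\leftrightarrow z$ that Algorithm~\ref{alg:SolveMEVP2} later exploits. Note also that your route leans on part~3 of Theorem~\ref{thm:Diagonalisation}, whose proof itself routes through Theorem~\ref{thm:m=2-square}, so you are silently re-deriving the strongly decomposable vector a second time.

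One cosmetic slip: the identification $\l_i\D_j-\l_j\D_i=A\otimes B-B\otimes A$ with $A=\l_iA_j-\l_jA_i$, $B=A_k$ is wrong as written. For $(i,j)=(0,1)$ one computes $(\l_0A_1-\l_1A_0)\otimes A_2-A_2\otimes(\l_0A_1-\l_1A_0)=\l_0\D_0+\l_1\D_1$, not $\l_0\D_1-\l_1\D_0$; the correct choice (used in the paper's proof of Theorem~\ref{thm:1-Parameter}) is $A=A_2$, $B=\l_0A_0+\l_1A_1$. This does not damage your argument, since the linearity observation you also give --- $\T^m\D_i{\T^n}^T$ is block anti-diagonal with lower-left block $\G_i$ for each $i$, hence so is any linear combination --- is the one that actually carries the proof, and it is exactly how the paper proceeds.
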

\begin{proof}
The matrix pencils in \eqref{eqn:1-Parameter} are equivalent to
$$
\T^{m}(\l_i\Delta_j\!-\!\l_j\Delta_i){\T^{n}}^T\!\T^{n}z\!=\!0,~(i,j)\in\{(0,1),(0,2),(1,2)\}.
$$
By Theorem~\ref{thm:Diagonalisation}, the matrices $\T^{m}\D_i{\T^{n}}^T$, $i=0,1,2$, have the form
$$
\T^{m}\D_i{\T^{n}}^T=\left[\begin{array}{cc}0&\star\\\G_i&0\end{array}\right]\!\!,\quad \G_i\in\CC^{\tilde{m}\times\tilde{n}},
$$
where $\star$ denotes terms whose expressions are not needed in the present context. Thus the matrix pencils in \eqref{eqn:1-Parameter} are equivalent to the following pencils:
$$
\left[\!\begin{array}{cc}0&\star\\\l_i\G_j\!-\!\l_j\G_i&0\end{array}\!\right]\!\T^{n}z=0,~(i,j)\in\{(0,1),(0,2),(1,2)\}.
$$
This shows that if \eqref{eqn:1-Parameter-Compressed} has a solution $0\ne\l=(\l_0,\l_1,\l_2)$ and $0\ne y\in\CC^{\tn}$, then \eqref{eqn:1-Parameter} has a solution $\l=(\l_0,\l_1,\l_2)$ and 
$$0\ne z={\T^n}^T\left[\begin{array}{c}y\\0\end{array}\right]={\V^n}^Ty\in\CC^{n^2}\!\!.$$
Suppose now that \eqref{eqn:1-Parameter} has a solution $0\ne\l=(\l_0,\l_1,\l_2)$ and $0\ne z\in\CC^{{n}^2}$. Then it follows from the last part of Theorem~\ref{thm:1-Parameter} that there exists a strongly decomposable eigenvector $0\ne x^{\otimes2}\in\CC^{{n}^2}$ satisfying \eqref{eqn:1-Parameter}. Now, Property~\ref{itm:P15} implies that
\bean
\T^{n}x^{\otimes2}&=&\left[\begin{array}{c}\left[\begin{array}{c}\S^{n}_D\\\sqrt{2}\S^{n}_L\end{array}\right]{\cH}^{n}\\-\sqrt{2}\S^{n}_U{\F}^{n}\end{array}\right]x^{\otimes2}
\\
&=&\left[\begin{array}{c}\left[\begin{array}{c}\S^{n}_D\\\sqrt{2}\S^{n}_L\end{array}\right]{\cH}^{n}x^{\otimes2}\\-\sqrt{2}\S^{n}_U{\F}^{n}x^{\otimes2}\end{array}\right]\\
&
=&\left[\begin{array}{c}\V^nx^{\otimes2}\\0\end{array}\right]=:\left[\begin{array}{c}y\\0\end{array}\right]\!\!,
\eean
since ${\F}^{n}x^{\otimes2}=0$ as $x^{\otimes2}$ is symmetric.
Therefore, for $(i,j)\in\{(0,1),(0,2),(1,2)\}$,
$$
\T^{m}(\l_i\D_j\!-\!\l_j\D_i){\T^{n}}^T\T^{n}x^{\otimes2}=\!\left[\!\begin{array}{cc}0&\star\\\l_i\G_j\!-\!\l_j\G_i&0\end{array}\!\right]\!\left[\!\begin{array}{c}y\\0\end{array}\!\!\right]\!=\!\left[\!\begin{array}{c}0\\(\l_i\G_j\!-\!\l_j\G_i)y\end{array}\!\right]\!=0,
$$
which proves that \eqref{eqn:1-Parameter-Compressed} has a solution $\l=(\l_0,\l_1,\l_2)$ and $0\ne y\in\CC^{\tn}$. It follows that the ${m}\times {n}$ two-parameter MPPs in \eqref{eqn:2-Parameter} have a solution if and only if the  $\tilde{m}\times\tilde{n}$ one-parameter MPPs in \eqref{eqn:1-Parameter-Compressed} have a solution.
\end{proof}
\section{The Solution of the Two-Parameter MPP when \texorpdfstring{$\boldsymbol{m=n+1}$}{m=n+1}}\label{sec:m=n+1}
The number of unknowns in Problem~\ref{prob:m=2} is $n+1$ and the number of equations is $m$ (see Remark~\ref{rem:Assumptions}). Thus when $m=n+1$, the number of equations is equal to the number of unknowns, and it might be expected that the problem always has at least one solution.   
The next result shows that this is the case. Furthermore, it shows that, under a certain rank condition on the Kronecker determinants, the set of linked matrix pencils in \eqref{eqn:1-Parameter-Compressed} can be decoupled and their solution reduces to finding the simultaneous eigenvectors of a set of three commuting matrices \cite{horn1985}.
\begin{thm}\label{thm:Commutativity}
With everything as defined in Theorem~\ref{thm:1-Parameter-Compressed}, suppose that ${m}\!=\!{n}\!+\!1$. Then the one-parameter MPPs in \eqref{eqn:1-Parameter-Compressed} (and therefore the $({n}\!+\!1)\times {n}$ two-parameter MPP in Problem~\ref{prob:m=2}) have at least one simultaneous solution, and generically $\tn$ solutions. Furthermore, suppose that there exist $\a_0,\a_1,\a_2\!\in\!\CC$ such that $\G\!:=\!\a_0\G_0\!+\!\a_1\G_1\!+\!\a_2\G_2$ is nonsingular. Then 
\begin{enumerate}
\item $\G^{-1}\G_0,~\G^{-1}\G_1$ and $\G^{-1}\G_2$ commute.
\item The solutions of the $({n}+1)\times {n}$ two-parameter MPP in Problem~\ref{prob:m=2} are given by the simultaneous solutions of the three $\tilde{n}\times\tilde{n}$ eigenvalue problems
\be\label{eqn:All solutions}
(\l_i I_{\tilde{n}}-\G^{-1}\G_i)y=0,\quad i=0,1,2,\quad 0\ne y\in\CC^{\tilde{n}}.
\ee
\end{enumerate}
\end{thm}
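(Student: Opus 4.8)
The plan is to establish the three assertions in turn, relying heavily on Theorem~\ref{thm:1-Parameter-Compressed} and Theorem~\ref{thm:Diagonalisation}. For the existence and generic-count claim, I would view \eqref{eqn:1-Parameter-Compressed} as three linked pencils of square-minus-one shape: each $\G_i$ is $\tn\times\tn$ since $\tm=\tfrac{m(m-1)}{2}=\tfrac{(n+1)n}{2}=\tn$ when $m=n+1$. So all three Kronecker determinant operators are now square $\tn\times\tn$ matrices. The first pencil $\l_0\G_1-\l_1\G_0$ is then a genuine (square) one-parameter matrix pencil; provided it is regular (i.e.\ $\det(\l_0\G_1-\l_1\G_0)\not\equiv0$, which is where a generic hypothesis or the nonsingularity of some $\a_0\G_0+\a_1\G_1+\a_2\G_2$ enters), it has exactly $\tn$ eigenvalue pairs $(\l_0,\l_1)$ counted with multiplicity, each with an eigenvector. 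For each such eigenpair I would then invoke the ``only if'' direction already proved in Theorem~\ref{thm:1-Parameter-Compressed} together with Theorem~\ref{thm:1-Parameter}, which shows any solution of one of the pencils extends to a simultaneous solution of all three and to a solution of Problem~\ref{prob:m=2}. Counting these gives generically $\tn$ solutions, and regularity of the pencil guarantees at least one even in degenerate cases; the edge case where the first pencil is singular but a solution still exists is handled by noting that \eqref{eqn:2-Parameter} with $m=n+1$ always has a solution because $\det(\l_0A_0+\l_1A_1+\l_2A_2)$, suitably interpreted via the $(n+1)\times n$ structure (e.g.\ via any $n\times n$ minor), yields a nontrivial homogeneous polynomial system in $(\l_0,\l_1,\l_2)$ that must have a projective zero.

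\textbf{Commutativity of $\G^{-1}\G_i$.} Here I would exploit the ``linked'' structure of \eqref{eqn:1-Parameter-Compressed} directly. The three relations $\l_i\G_j y=\l_j\G_i y$ say that on any common solution eigenvector $y$, the actions of $\G_0,\G_1,\G_2$ are proportional. The cleaner route, mirroring Atkinson's classical argument (Theorems~6.3.1--6.3.2 of~\cite{Atkinson1972}, which Theorem~\ref{thm:m=2-square} extends), is algebraic: form $\G=\a_0\G_0+\a_1\G_1+\a_2\G_2$ and consider $M_i:=\G^{-1}\G_i$. One wants $M_iM_j=M_jM_i$. I would try to derive, from the defining Kronecker-product expressions $\G_i=2\,\U^m(A_j\otimes A_k){\V^n}^T$ (indices cyclic) together with Properties~\ref{itm:P6},~\ref{itm:P7},~\ref{itm:P15},~\ref{itm:P16} of Lemma~\ref{lem:commutation} and~\ref{lem:Selection}, a bilinear identity of the form $\G_i\,\U^n(\cdot){\V^n}^T\,\G_j = \G_j\,\U^n(\cdot){\V^n}^T\,\G_i$ — that is, the Kronecker determinants ``commute through'' the resolvent of $\G$. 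Concretely, I expect the key is that the full $m^2\times n^2$ operators $\D_i$ satisfy a Jacobi-type identity $\D_0+\D_1+\D_2$-style relation and a product relation coming from the fact that $A_i\otimes A_j\otimes A_k$ is fully symmetric under permutations only up to the commutation matrix $\K$; combined with the block-anti-diagonal form of Theorem~\ref{thm:Diagonalisation}, the off-diagonal blocks $\D_{12}^{(i)}$ and the determinant blocks $\G_i$ are forced into a commuting family once $\G$ is invertible. I would pin this down by showing $\G_i\,\G^{-1}\,\G_j$ is symmetric in $i\leftrightarrow j$, which is equivalent to the claimed commutativity after multiplying by $\G^{-1}$ on the left.

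\textbf{Reduction to simultaneous eigenvalue problems.} Granting commutativity, the standard linear-algebra fact (\cite{horn1985}) that a commuting family of diagonalizable — or, more carefully, a commuting family admitting a common eigenbasis — matrices can be simultaneously triangularized gives the final statement. From $M_i=\G^{-1}\G_i$ I would argue: if $(\l_0,\l_1,\l_2)$, $y$ solve \eqref{eqn:1-Parameter-Compressed}, then $\a_0\l_0+\a_1\l_1+\a_2\l_2$ times $\G y$ equals, by the linked relations, $(\l_i\G - \text{(correction)})y$ in a way that forces $\G_i y = \l_i'\,\G y$ with $\l_i'=\l_i/(\a_0\l_0+\a_1\l_1+\a_2\l_2)$ (after normalising so $\a_0\l_0+\a_1\l_1+\a_2\l_2=1$, which is possible since $\G y\ne0$ when $y\ne0$ and $\G$ is invertible). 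Hence $M_i y=\l_i y$, giving \eqref{eqn:All solutions}. Conversely a common eigenvector $y$ of $M_0,M_1,M_2$ with eigenvalues $\l_0,\l_1,\l_2$ satisfies $\G_i y=\l_i\G y$, so $\l_i\G_j y=\l_i\l_j\G y=\l_j\G_i y$, recovering \eqref{eqn:1-Parameter-Compressed}; by Theorem~\ref{thm:1-Parameter-Compressed} and Theorem~\ref{thm:1-Parameter} this lifts to a solution $x$ of Problem~\ref{prob:m=2} via $x^{\otimes2}={\V^n}^Ty$, and the normalisation $\a_0\l_0+\a_1\l_1+\a_2\l_2=1$ is harmless since eigenvalues are defined up to scaling.

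\textbf{Main obstacle.} The hard part will be the commutativity claim (assertion~1): extracting a clean commutation identity for the deflated $\tn\times\tn$ Kronecker determinants $\G_i$ out of the Kronecker-product algebra. The full operators $\D_i$ do not commute as written (they are rectangular and of commutator type), so the argument must pass through the block-anti-diagonalisation of Theorem~\ref{thm:Diagonalisation} and use the strong decomposability of null vectors (Theorem~\ref{thm:m=2-square}) to reduce every identity to its action on symmetric tensors $x^{\otimes2}$; verifying that the requisite triple-product identity in $A_0,A_1,A_2$ holds — essentially a disguised version of the classical fact that Atkinson's operator determinants $\Delta_i$ commute when the array is ``right definite'' or, here, when $\G$ is nonsingular — is where the real bookkeeping lies, and I would expect to need the dimension coincidence $\tm=\tn$ crucially, since commutativity of a rectangular family makes no sense otherwise.
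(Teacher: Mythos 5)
Your plan for the existence count and for Part~2 is close in spirit to the paper's, but the heart of the theorem --- assertion~1, the commutativity of $\G^{-1}\G_0$, $\G^{-1}\G_1$, $\G^{-1}\G_2$ --- is not proved in your proposal: you state the target identity $\G_i\G^{-1}\G_j=\G_j\G^{-1}\G_i$, speculate that it should follow from a ``Jacobi-type'' or triple-product identity in the Kronecker algebra, and explicitly defer the ``real bookkeeping.'' That bookkeeping is the proof, and the paper does not obtain it from any pointwise algebraic identity among the $\D_i$ or $\G_i$ (no such identity is available; the $\G_i$ need not commute among themselves). Instead the paper runs a dimension-counting sandwich: it forms the linear map $\mathcal{L}=\left[\begin{array}{ccc}(A_0\otimes I_n){\V^n}^T&(A_1\otimes I_n){\V^n}^T&(A_2\otimes I_n){\V^n}^T\end{array}\right]$, whose row dimension is $nm=n(n+1)=2\tilde{n}$ precisely because $m=n+1$, so its null space $\Y\subseteq\CC^{3\tilde n}$ has $\dim\Y\ge\tilde n$. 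Using the commutation matrix $\K^{nm}$ it shows every $(y_0,y_1,y_2)\in\Y$ satisfies $\G_iy_j=\G_jy_i$, hence $\Y\subseteq\bar\Y$, and nonsingularity of $\G$ forces $\bar\Y\subseteq\bar{\bar\Y}:=\{(\G^{-1}\G_0\bar y,\G^{-1}\G_1\bar y,\G^{-1}\G_2\bar y)\}$, which has dimension at most $\tilde n$. The resulting equalities $\Y=\bar\Y=\bar{\bar\Y}$ then yield $\G_1\G^{-1}\G_0y=\G_0\G^{-1}\G_1y$ for \emph{every} $y$, which is the commutation identity. This is where both hypotheses ($m=n+1$ and $\G$ invertible) actually do their work; your proposed route gives no mechanism for either to enter, so assertion~1 remains a genuine gap.

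Two smaller points. For existence, your fallback argument for the singular-pencil case --- that the vanishing of the $n\times n$ minors of $\l_0A_0+\l_1A_1+\l_2A_2$ gives a homogeneous system that ``must have a projective zero'' --- is unjustified: that system is $n+1$ equations in two projective variables and is generically inconsistent. It is also unnecessary: if $\det(\l_0\G_1-\l_1\G_0)\equiv0$ then every $(\l_0,\l_1)$ admits a null vector, and the paper's cleaner case split ($\G_0$ singular versus $\G_0$ invertible) gives existence unconditionally. In Part~2, your backward direction asserts $x^{\otimes2}={\V^n}^Ty$ and lifts via Theorems~\ref{thm:1-Parameter-Compressed} and \ref{thm:1-Parameter}; but ${\V^n}^Ty$ need not be strongly decomposable (see Example~2), and the converse direction of Theorem~\ref{thm:1-Parameter} constructs a possibly different $\l_2$, so it does not by itself show that the \emph{same} $\l$ solves Problem~\ref{prob:m=2}. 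The paper instead uses $\Y=\bar{\bar\Y}$ to conclude directly that $\bigl((\l_0A_0+\l_1A_1+\l_2A_2)\otimes I_n\bigr){\V^n}^Ty=0$, which loses column rank for that specific $\l$, with $x$ taken from the resulting null space when ${\V^n}^Ty$ is not decomposable.
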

\begin{proof}
If ${m}={n}+1$, then $\tm=\tn$ and the Kronecker determinants in \eqref{eqn:1-Parameter-Compressed} are therefore square. Consider one of the pencils in equations \eqref{eqn:1-Parameter-Compressed}, say the first one: $(\l_0\G_1-\l_1\G_0)y=0$. We first show that this always has a nontrivial solution. If $\G_0$ is singular, then we can take $\l_0=0$, $\l_1$ an arbitrary complex number and $y\ne0$ can be chosen to be any vector in the null space of $\G_0$ so that $\G_0y=0$. If $\G_0$ is nonsingular, then pre-multiplying by $\G_0^{-1}$ shows that the pencil is equivalent to the eigenvalue problem $(\l_0\G_0^{-1}\G_1-\l_1I_{\tn})y=0$. Then we can take $\l_0=1$ and $\l_1$ and $y$ any eigenvalue-eigenvector pair for $\G_0^{-1}\G_1$. Thus the pencil always has a nontrivial solution (and generically $\tn$ nontrivial solutions). It follows from the proof of Theorem~\ref{thm:1-Parameter-Compressed} that $(\l_0\D_1-\l_1\D_0)z=0$ has at least one nontrivial solution with $z={\V^n}^Ty\ne0$. It then follows from the proof of Theorem~\ref{thm:1-Parameter} that the two-parameter MPP in Problem~\ref{prob:m=2} has at least one nontrivial solution (and generically $\tn$ nontrivial solutions) and that the pencils in \eqref{eqn:1-Parameter} (and therefore the pencils in \eqref{eqn:1-Parameter-Compressed}) have at least one nontrivial simultaneous solution (and generically $\tn$ nontrivial simultaneous solutions).

Suppose now that $\G$ is nonsingular.
\begin{enumerate}
\item To prove the commutativity relations, define the linear map
\begin{equation*}
\mathcal{L} := \left[\begin{array}{ccc}(A_0\!\otimes\! I_n){\V^n}^T&(A_1\!\otimes\! I_n){\V^n}^T&(A_2\!\otimes\! I_n){\V^n}^T\end{array}\right]\in\CC^{(nm)\times(3\tilde{n})},
\end{equation*}
and the subspace
\be\label{eqn:Z}
\Y:=\left\{\left[\!\!\begin{array}{c}y_0\\y_1\\y_2\end{array}\!\!\right]\!\!:y_i\in\CC^{\tilde{n}},~ \mathcal{L}\left[\!\!\begin{array}{c}y_0\\y_1\\y_2\end{array}\!\!\right]=0\right\}.
\ee 
Since the number of rows of the linear map $\mathcal{L}$ in \eqref{eqn:Z} is $nm=n(n+1)=2\tilde{n}$ (see assumptions~\eqref{eqn:Assumptions}) and since $\Y$ is the null space of $\mathcal{L}$, then
\be\label{eqn:dimZ}
\dim{\Y}\ge 3\tilde{n}-2\tilde{n}=\tilde{n}.
\ee
Let ${\K}^{nm}\!\in\!\RR^{nm\times nm}$ be the commutation matrix \cite{Magnus1979} satisfying ${\K}^{nm}(\A_i\otimes I_n)\!=\!(I_n\otimes A_i){\K}^n$, where ${\K}^n$ is defined in Lemma~\ref{lem:commutation}. Pre-multiplying the equation in \eqref{eqn:Z} by ${\K}^{nm}$ and using the fact that ${\K}^n{\V^n}^T\!\!=\!{\V^n}^T$ from the definition of $\V^n$ in \eqref{eqn:SnTn} and Property~\ref{itm:P2}  gives
\be\label{eqn:f0f1f22}
\left[\!\!\begin{array}{ccc}
(A_0\!\otimes\! I_n){\V^n}^T&\!\!(A_1\!\otimes\! I_n){\V^n}^T&\!\!(A_2\!\otimes\! I_n){\V^n}^T\\
(I_n\!\otimes\! A_0){\V^n}^T&\!\!(I_n\!\otimes\! A_1){\V^n}^T&\!\!(I_n\!\otimes\! A_2){\V^n}^T\end{array}\!\!\right]\!\!\!\left[\!\!\begin{array}{c}y_0\\y_1\\y_2\end{array}\!\!\right]\!\!=\!0,\quad\forall\! \left[\!\!\begin{array}{c}y_0\\y_1\\y_2\end{array}\!\!\right]\!\!\in\!\Y.
\ee
Pre-multiplying the first block row of \eqref{eqn:f0f1f22} by $\U^m(I_m\otimes A_2)$ and the second by $\U^m(A_2\otimes I_m)$ and subtracting gives
$$
\U^m\left((A_0\otimes A_2)\!-\!(A_2\otimes A_0)\right){\V^n}^T\!y_0\!+\!\U^m\left((A_1\otimes A_2)\!-\!(A_2\otimes A_1)\right){\V^n}^T\!y_1\!=\!0,
$$
and repeating with $A_1$ and $A_0$, we get the Kronecker determinant equations
\be\label{DeterminantEquations}
\left.\begin{array}{rcl}\G_1y_0&=&\G_0y_1,\\
\G_2y_0&=&\G_0y_2,\\
\G_2y_1&=&\G_1y_2,\end{array}\right.
\ee
from the definitions in \eqref{eqn:Dis} and \eqref{Gammas}. Thus, every vector $\left[\begin{array}{ccc}y_0^T&y_1^T&y_2^T\end{array}\right]^T$ in $\Y$ satisfies the Kronecker determinant equations in \eqref{DeterminantEquations}. It follows that
\be\label{eqn:YinYbar}
\Y\!\subseteq\!\bar{\Y}\!:=\!\left\{\left[\!\!\begin{array}{c}\bar{y}_0\\\bar{y}_1\\\bar{y}_2\end{array}\!\!\right]\!\!\!:\bar{y}_i\!\in\!\CC^{\tilde{n}},\quad
\begin{array}{c}
\G_1\bar{y}_0\!=\!\G_0\bar{y}_1\\
\G_2\bar{y}_0\!=\!\G_0\bar{y}_2\\
\G_2\bar{y}_1\!=\!\G_1\bar{y}_2
\end{array}
\right\}.
\ee

Let  $\left[\!\!\begin{array}{ccc}\bar{y}_0^T&\bar{y}_1^T&\bar{y}_2^T\end{array}\!\!\right]^T\!\!\!\in\!\bar{\Y}$. Then
$$
\G_i\bar{y}_j=\G_j\bar{y}_i,~i,j=0,1,2.
$$
Multiplying this by $\a_i$ and summing over $i$ for each $j$:
\be\label{eqn:123}
\G\bar{y}_0=\G_0\bar{y},\quad \G\bar{y}_1=\G_1\bar{y},\quad \G\bar{y}_2=\G_2\bar{y},
\ee
where $\bar{y}=\a_0\bar{y}_0+\a_1\bar{y}_1+\a_2\bar{y}_2$.
Pre-multiplying the three equations in \eqref{eqn:123} by $\G^{-1}$ gives 
\be\label{eqn:123i}
\bar{y}_0=\G^{-1}\G_0\bar{y},\quad\bar{y}_1=\G^{-1}\G_1\bar{y},\quad\bar{y}_2=\G^{-1}\G_2\bar{y}.
\ee
Thus, every vector $\left[\begin{array}{ccc}\bar{y}_0^T&\bar{y}_1^T&\bar{y}_2^T\end{array}\right]^T$ in $\bar\Y$ satisfies \eqref{eqn:123i}. It follows that
\be\label{eqn:Ybarbar}
\Y\subseteq\bar{\Y}\subseteq\bar{\bar{\Y}}:=\left\{\left[\!\!\begin{array}{c}\bar{\bar{y}}_0\\\bar{\bar{y}}_1\\\bar{\bar{y}}_2\end{array}\!\!\right]\!=\!\left[\!\!\begin{array}{c}\G^{-1}\G_0\\\G^{-1}\G_1\\\G^{-1}\G_2\end{array}\!\!\right]\!\bar{\bar{y}}
\!:\bar{\bar{y}}\!\in\!\CC^{\tilde{n}}\right\}.
\ee
Now it is clear that $\dim{\bar{\bar{\Y}}}\le\tilde{n}$ and so it follows from \eqref{eqn:dimZ} that
$$
\tilde{n}\le\dim{\Y}\le\dim{\bar{\Y}}\le\dim{\bar{\bar{\Y}}}\le\tilde{n},
$$
and so $\tilde{n}=\dim{\Y}=\dim{\bar{\Y}}=\dim{\bar{\bar{\Y}}}=\tilde{n}$. It follows that
\be\label{eqn:Y=Ybar=Ybarbar}
\Y=\bar{\Y}=\bar{\bar{\Y}}.
\ee
Next, we prove the first commutativity result: $\G^{-1}\G_1\G^{-1}\G_0=\G^{-1}\G_0\G^{-1}\G_1$. For any $y\in\CC^{\tilde{n}}$, let 
\be\label{eqn:123final}
y_0=\G^{-1}\G_0y,\quad y_1=\G^{-1}\G_1y,\quad y_2=\G^{-1}\G_2y,
\ee
so that $\left[\begin{array}{ccc}y_0^T&y_1^T&y_2^T\end{array}\right]^T\in\bar{\bar{\Y}}$.
Since $\bar{\bar{\Y}}=\bar{\Y}$, it follows from \eqref{eqn:YinYbar} that
\be\label{eqn:123finalfinal}
\G_1y_0=\G_0y_1,
\qquad\G_2y_0=\G_0y_2,\qquad\G_2y_1=\G_1y_2.
\ee
Pre-multiplying the first equation in \eqref{eqn:123final} by $\G_1$ and the second by $\G_0$ gives
$$
\G_1\G^{-1}\G_0y=\G_1y_0,\qquad\qquad
\G_0\G^{-1}\G_1y=\G_0y_1,
$$
and using the first equation in \eqref{eqn:123finalfinal} gives
$\G_1\G^{-1}\G_0y=\G_0\G^{-1}\G_1y.$
Since this is satisfied for all $y\in\CC^{\tilde{n}}$ it follows that $\G_1\G^{-1}\G_0=\G_0\G^{-1}\G_1$. Pre-multiplying by $\G^{-1}$ then proves the first commutativity result. The proof of the other commutativity results is similar (simply change the indices) and therefore omitted.
\item Note that since $\G^{-1}\G_0,\G^{-1}\G_1$ and $\G^{-1}\G_2$ commute, then \eqref{eqn:All solutions} have simultaneous solutions and can be simultaneously upper triangularized using a joint Schur decomposition; see Chapter~4 of \cite{horn1985} for more details. Next, we prove that \eqref{eqn:2-Parameter}$\Rightarrow$\eqref{eqn:All solutions}. Now, \eqref{eqn:2-Parameter} implies that
$$
\left[\!\begin{array}{ccc}A_0\!\otimes\! I_n&\!A_1\!\otimes\! I_n&\!A_2\!\otimes\! I_n\!\end{array}\right]\!\left[\!\begin{array}{c}\l_0x^{\otimes2}\\\l_1x^{\otimes2}\\\l_2x^{\otimes2}\end{array}\!\right]\!=\!0.
$$
Since $x^{\otimes2}$ is symmetric, $x^{\otimes2}={\V^n}^Ty$ for some $0\ne y\in\CC^{\tilde{n}}$. It follows that
$$
\left[\!\!\begin{array}{ccc}(A_0\!\otimes\! I_n){\V^n}^T&\!\!(A_1\!\otimes\! I_n){\V^n}^T&\!\!(A_2\!\otimes\! I_n){\V^n}^T\!\!\end{array}\right]\!\!\left[\!\!\begin{array}{c}\l_0y\\\l_1y\\\l_2y\end{array}\!\!\right]\!=\!0,
$$
and it follows from \eqref{eqn:Z} and \eqref{eqn:YinYbar} that
$$
\left[\!\!\!\begin{array}{c}y_0\\y_1\\y_2\end{array}\!\!\!\right]:=
\left[\!\!\!\begin{array}{c}\l_0y\\\l_1y\\\l_2y\end{array}\!\!\!\right]\!
\in\Y\subseteq\bar{\Y},
$$
and so it follows from \eqref{eqn:123} that
$$
\l_0\G y=\a\G_0y,\quad \l_1\G y=\a\G_1y,\quad\l_2\G y=\a\G_2y,
$$
where $\a=\a_0\l_0+\a_1\l_1+\a_2\l_2$. Since $y\ne0$, $\G$ is non-singular and not all the $\l_i$s are zero, then $\a\ne0$. Pre-multiplying by $\G^{-1}$ proves \eqref{eqn:All solutions} since we do not distinguish between eigenvalues $\l$ and $\a\l$ for $\a\ne0$.

Next, we prove \eqref{eqn:All solutions}$\Rightarrow$\eqref{eqn:2-Parameter}. Let $y\!\ne\!0$ satisfy \eqref{eqn:All solutions}. Then \eqref{eqn:Ybarbar} implies that
$$
\left[\!\!\!\begin{array}{c}y_0\\y_1\\y_2\end{array}\!\!\!\right]:=
\left[\!\!\!\begin{array}{c}\l_0y\\\l_1y\\\l_2y\end{array}\!\!\!\right]
=\left[\!\!\begin{array}{c}\G^{-1}\G_0\\\G^{-1}\G_1\\\G^{-1}\G_2\end{array}\!\!\right]\!\!y\in\bar{\bar{\Y}}.
$$
It follows from \eqref{eqn:Y=Ybar=Ybarbar} and \eqref{eqn:Z} that
$$
\left[\!\!\begin{array}{ccc}(A_0\!\otimes\! I_n){\V^n}^T&\!\!(A_1\!\otimes\! I_n){\V^n}^T&\!\!(A_2\!\otimes\! I_n){\V^n}^T\!\!\end{array}\right]\!\!\left[\!\!\begin{array}{c}y_0\\y_1\\y_2\end{array}\!\!\right]\!=\!0,
$$
which is equivalent to
$$
\left[\!\!\begin{array}{ccc}A_0\!\otimes\! I_n&\!\!A_1\!\otimes\! I_n&\!\!A_2\!\otimes\! I_n\!\!\end{array}\right]\!\!\left[\!\!\begin{array}{c}\l_0{\V^n}^Ty\\\l_1{\V^n}^Ty\\\l_2{\V^n}^Ty\end{array}\!\!\right]\!=\!0.
$$
It follows that
\be
\left((\l_0A_0+\l_1A_1+\l_2A_2)\otimes I_n\right)z=0,\qquad z:={\V^n}^Ty\ne0.
\ee
This implies that $\l_0A_0+\l_1A_1+\l_2A_2$ loses column rank and \eqref{eqn:2-Parameter} follows. If $z$ is strongly decomposable so that $z=x^{\otimes2}$, then $x$ solves \eqref{eqn:2-Parameter}. Otherwise, $x$ can be chosen as any vector in the null space of $\l_0A_0+\l_1A_1+\l_2A_2$.
\end{enumerate}
\end{proof}
\begin{remark}
As mentioned in the introduction, we use some of the machinery developed in \cite{Atkinson1972} for the solution of the MEV problem in \eqref{eqn:Aij}.
The commutation results in Part~1 of our Theorem~\ref{thm:Commutativity}  correspond to Theorem~6.7.2 in \cite{Atkinson1972} and their proof follows a similar method of proof while Part~2 corresponds to Theorems~6.6.1 and 6.8.1 in \cite{Atkinson1972}. Note also that the non-singularity of $\G$ allows us to directly go from Problem~\ref{prob:m=2} to the solution in \eqref{eqn:All solutions}.
\end{remark}
\newpage~

\section{Solution Algorithm and Examples}\label{sec:Examples} 
By way of summarizing our results, we present Algorithm~\ref{alg:SolveMEVP2} for the solution of the two-parameter MPP in Problem~\ref{prob:m=2}. We also present three examples that illustrate and clarify our solution algorithm.
\begin{algorithm}
    \caption{Computation of all solutions of the two-parameter MPP if they exist}
    \label{alg:SolveMEVP2}
    \begin{algorithmic}[1]
        \REQUIRE Matrices $A_0, A_1,A_2\!\in\!\CC^{m\times n}$
        
        \COMMENT{as described in Problem~\ref{prob:m=2}, with $m\!\ge \!n\!+\!1$.}
        \STATE Evaluate the sparse matrices $\U^m$ and $\V^n$ defined in \eqref{eqn:SnTn}.
        \STATE Evaluate the matrices $\G_0,\G_1,\G_2$ using either of the expressions in \eqref{Gammas} where $\D_0,\D_1,\D_2$ are defined in \eqref{eqn:Dis}.
        \IF[Problem~\ref{prob:m=2} has a solution.]{($m\!=\!n+1$)}
        \IF{$\G\!:=\!\a_0\G_0\!+\!\a_1\G_1\!+\!\a_2\G_2$ is nonsingular for some $\a_0,\a_1,\a_2\!\in\!\CC$}
        \STATE Find all solutions $\l^{(p)}\!=\!(\l_0^{(p)},\l_1^{(p)},\l_2^{(p)})$ and $y^{(p)}$, $p=1,\ldots,k\le\tilde{n}:=\frac{n(n+1)}{2}$,
        to the simultaneous eigenvalue problems in \eqref{eqn:All solutions} using e.g. the joint upper triangular Schur form approach \cite{horn1985}.  
        \ELSE
        \STATE Find all solutions $\l^{(p)}\!=\!(\l_0^{(p)},\l_1^{(p)},\l_2^{(p)})$ and $y^{(p)}$, $p=1,\ldots,k\le\tilde{n}$,
        to the one-parameter simultaneous MPPs in \eqref{eqn:1-Parameter-Compressed} using e.g. \cite{Van1979}.
        \ENDIF
        \IF{ ${\V^n}^Ty^{(p)}$ is strongly decomposable so that ${\V^n}^Ty^{(p)}=x^{(p)}\otimes x^{(p)}$}
        \STATE $x^{(p)}$ is the eigenvector.
        \ELSE
        \STATE Choose $x^{(p)}$ to be in the null space of $\l_0^{(p)}A_0+\l_1^{(p)}A_1+\l_2^{(p)}A_2$.
        \ENDIF
        \ELSE[($m>n+1$) and Problem~\ref{prob:m=2} has a solution if and only if \eqref{eqn:1-Parameter-Compressed} has a simultaneous solution.]
        \IF{\eqref{eqn:1-Parameter-Compressed} has a simultaneous solution}
        \STATE Find all solutions $\l^{(p)}\!=\!(\l_0^{(p)},\l_1^{(p)},\l_2^{(p)})$ and $y^{(p)}$, $p=1,\ldots,k\le\tilde{n}$,
        to the one-parameter simultaneous MPPs in \eqref{eqn:1-Parameter-Compressed} using e.g. \cite{Van1979}.
        \IF{ ${\V^n}^Ty^{(p)}$ is strongly decomposable so that ${\V^n}^Ty^{(p)}=x^{(p)}\otimes x^{(p)}$}
        \STATE $x^{(p)}$ is the eigenvector.
        \ELSE
        \STATE Choose $x^{(p)}$ to be in the null space of $\l_0^{(p)}A_0+\l_1^{(p)}A_1+\l_2^{(p)}A_2$.
        \ENDIF
        \ENDIF
        \ENDIF
        \ENSURE $(\l^{(1)},x^{(1)}),\ldots,(\l^{(k)},x^{(k)})$ or no solution.
    \end{algorithmic}
\end{algorithm}
\begin{remark}\label{rem:Algorithm}
With reference to line 1, note that, while $\U^m$ and $\V^n$ contain a $\sqrt{2}$ factor (see \eqref{eqn:SnTn}), this factor can be removed using diagonal scaling and, since $\K^n$ and $\K^m$ are sparse (0,1) matrices, the matrices $\U^m$ and $\V^n$ are sparse $(0,1)$ compression matrices, which is computationally useful when the dimensions $m$ and $n$ are large. More formally, we can replace the definition of $\ \U^m$ and $\V^n$ in (and only in) \eqref{Gammas} by $-2\S^{m}_U{\F}^{m}$ and $\left[\begin{array}{c}\S^{n}_D\\2\S^{n}_L\end{array}\right]{\cH}^{n}$, respectively.
\end{remark}
\begin{remark}
With reference to line 2 regarding the numerical evaluation of Kronecker determinants in \eqref{Gammas}, note that in \cite[Chapter 4.3]{Alsubaie2019}, using the fact that $\U^m$ and $\V^n$ are effectively $(0,1)$ sparse compression matrices and using an element-by-element expression for $A_j\otimes A_k$, two algorithms were presented for evaluating the expressions depending on the available memory and computation time.
\end{remark}
\begin{remark}
With reference to line~4, we are not aware of any literature for determining whether there exists a linear combination of three general square complex matrices which is nonsingular or any efficient algorithms for determining such a combination if it exists. In Example~1 we used a simple search algorithm. In Example~2, we determined that such a combination does not exist from the special structure of the matrices. Investigating this issue in the general case is a future research direction.
\end{remark}
\begin{remark}
With reference to line 6, that is when $\G=\a_0\G_0+\a_1\G_1+\a_2\G_2$ is singular for all $\a_0,\a_1,\a_2\in\CC$, then we do not have a commutativity property and we must use Theorem~\ref{thm:1-Parameter-Compressed} for the solution. Nevertheless, it is shown in Theorem~\ref{thm:1-Parameter-Compressed} that the solutions have simultaneous eigenvectors. This suggests some partial commutativity properties. Furthermore, with reference to line 14, that is, when $m>n+1$, it is not known whether $\G$ (which is now a long matrix) having a full column rank implies any commutativity properties. Investigating these issues is a future research direction.
\end{remark}

\subsection{Example 1}
This example presents a $4\times3$ two-parameter MPP. Let
$$
A_0=\left[\begin{array}{ccc}2&3&1\\2&2&2\\4&4&3\\5&5&4\end{array}\right]\!\!,\quad
A_1=\left[\begin{array}{ccc}3&3&1\\1&2&2\\2&3&3\\4&2&4\end{array}\right]\!\!,\quad
A_2=\left[\begin{array}{ccc}3&1&2\\3&3&3\\3&4&4\\4&4&5\end{array}\right]\!\!\cdot
$$
Since $m=n+1$, then using step 3 of Algorithm~\ref{alg:SolveMEVP2}, and ignoring the $\sqrt{2}$ term in $\U^m$ and $\V^n$ following Remark~\ref{rem:Algorithm}, the Kronecker determinant matrices are:
\begin{equation*}
\G_0 \!= \!
    \left[\!\begin{array}{rrrrrr}
    12 & 14 & -2 & 22 & 8 & 12 \\
    6 & 18 & -4 & 20 & 4 & 14 \\
    0 & 20 & -6 & 28 & -2 & 22 \\
    -6 & -2 & -2 & -10 & -10 & -4 \\
    -16 & 4 & -4 & -12 & -22 & 0 \\
    -8 & 8 & -2 & -4 & -12 & 6
    \end{array}\!\right]\!\!,
\quad
\G_1 \!= \!
    \left[\!\begin{array}{rrrrrr}
    0 & -14 & 2 & -14 & 2 & -12 \\
    12 & -16 & 4 & -2 & 12 & -10 \\
    14 & -14 & 6 & 0 & 16 & -10 \\
    12 & 8 & 2 & 20 & 14 & 10 \\
    14 & 14 & 4 & 28 & 18 & 18 \\
    -2 & 8 & 2 & 6 & 0 & 8
    \end{array}\!\right]\!\!,
\end{equation*}
\begin{equation*}
\G_2 \! = \!
    \left[\!\begin{array}{rrrrrr}
    -8 & 0 & 0 & -10 & -6 & 0 \\
    -16 & -6 & 0 & -24 & -10 & -2 \\
    -14 & -18 & 0 & -28 & -10 & -6 \\
     0 & -4 & 0 & -4 & -2 & -4 \\
    6 & -12 & 0 & -6 & 4 & -12 \\
    12 & -14 & 0 & -2 & 10 & -10
    \end{array}\!\right]\!\!\cdot
\end{equation*}
Since $\G_0$ is nonsingular, then we can use Theorem~\ref{thm:Commutativity} (see step 4 of Algorithm~\ref{alg:SolveMEVP2}) to get all 6 ($=\tn$) eigenvalues $\l$ as
\begin{equation*}
\left\{\!\!
\left[\!\!\!\begin{array}{r}1\\-1\\0\end{array}\!\!\!\right]\!\!\!, \!
\left[\!\!\!\begin{array}{c}1\\-0.0674\\\phantom{-}0.2755\end{array}\!\!\!\right]\!\!\!,
\!
\left[\!\!\!\begin{array}{c}1\\\phantom{-}1.1714\\-1.5777\end{array}\!\!\!\right]\!\!\!,
\!
\left[\!\!\!\begin{array}{c}1\\-0.8627 - 0.1011i\\\phantom{-}1.2215 - 0.8717i\end{array}\!\!\!\right]\!\!\!,
\!
\left[\!\!\!\begin{array}{c}1\\-0.8627 \!+\! 0.1011i\\\phantom{-}1.2215 \!+\! 0.8717i\end{array}\!\!\!\right]\!\!\!,
\!
\left[\!\!\!\begin{array}{c}1\\-1.1025\\\phantom{-}0.0630\end{array}\!\!\!\right]\!\!
\right\}\!\cdot
\end{equation*}

\newpage
\subsection{Example 2}
This example presents a $3\times2$ two-parameter pencil with a continuum of solutions and demonstrates that $m \geq n + 1$ is only a necessary condition for a zero-dimensional solution set. Let
$$
A_0=\left[\begin{array}{cc}
1&0\\4&0\\7&0\end{array}\right]\!\!,\quad
A_1=\left[\begin{array}{cc}
2&0\\5&0\\8&0\end{array}\right]\!\!,\quad
A_2=\left[\begin{array}{cc}
3&0\\0&6\\9&0\end{array}\right]\!\!\cdot
$$
Note that $A_0$ and $A_1$ have zero second columns, so there is a continuum of solutions $\l_0$ and $\l_1$ arbitrary, $\l_2=0$ and $x=\left[\begin{array}{cc}0&1\end{array}\right]^T
$. In this example $m=n+1$, then using step 3 of Algorithm~\ref{alg:SolveMEVP2}, the Kronecker determinants are
\begin{equation*}
\G_0 = 
    \left[\begin{array}{rrr}
    -30&0&24\\
    -12&0&0\\
    90&0&-96
    \end{array}\right]\!\!,
\quad
\G_1 = 
    \left[\begin{array}{rrr}
    24&0&-12\\
    24&0&0\\
    -72&0&84
    \end{array}\right]\!\!,
\quad
\G_2 = 
    \left[\begin{array}{rrr}
    -6&0&0\\
    -12&0&0\\
    -6&0&0
    \end{array}\right]\!\!\cdot
\end{equation*}
Note that the three Kronecker determinants have a zero second column and so $\a_0\G_0+\a_1\G_1+\a_2\G_2$ is singular for all $\a_0,\a_1,\a_2\in\CC$ and we need to use Theorem~\ref{thm:1-Parameter-Compressed}, which follows from Theorem~\ref{thm:1-Parameter} (see step 6 of Algorithm~\ref{alg:SolveMEVP2}).
Solving $(\l_i\G_j - \l_j\G_i)y=0$, $(i,j)\in\{(0,1),(0,2),(1,2)\}$ by reducing the pairs $\{\G_i,\G_j\}$ to the Kronecker Canonical Form \cite{Van1979} or otherwise, we get the two simultaneous solutions:
\bean
\left.\begin{array}{llll}\l^{(1)}\!=\!\left[\!\begin{array}{c}\star\\\star\\0\end{array}\!\right]\!, &y^{(1)} \!=\! \matThrOne{\!0\!}{\!1\!}{\!0\!}\!,&z^{(1)} \!=\! \! \matFourOne{\!0\!}{\!0\!}{\!0\!}{\!1\!} \!=\! \matTwoOne{\!0\!}{\!1\!} \!\otimes\! \matTwoOne{\!0\!}{\!1\!}&\Rightarrow x^{(1)}\!=\!\left[\!\begin{array}{c}0\\1\end{array}\!\right]\!,\\ \noalign{\vskip2pt}
\l^{(2)}\!=\!\left[\!\!\begin{array}{c}1\\-2\\1\end{array}\!\!\right]\!,&y^{(2)} \!=\! \matThrOne{\!\!1\!\!}{\!\!a\!\!}{\!\!1\!\!}\!,&z^{(2)} \!=\!\! \matFourOne{\!1\!}{\!1\!}{\!1\!}{\!a\!}\! =\! \matTwoOne{\!\!1\!\!}{\!\!1\!\!}\! \otimes \!\matTwoOne{\!\!1\!\!}{\!\!1\!\!}&\Rightarrow x^{(2)}\!=\!\left[\!\!\begin{array}{c}1\\1\end{array}\!\!\right]\!,
\end{array}\right.
\eean
where $\star$ denotes an arbitrary complex number and where we have normalised the second eigenvalue such that $\l_0=1$. The second eigenvector $y^{(2)}$ has an arbitrary entry, denoted as $a$. Theorem~\ref{thm:1-Parameter} (via Theorem~\ref{thm:m=2-square}) guarantees that there exists a strongly decomposable $z^{(2)}={\V^n}^Ty^{(2)}$; choosing $a=1$ ensures $z^{(2)}$ is strongly decomposable. Note that we get all the solutions using Theorem~\ref{thm:1-Parameter-Compressed}, Theorem~\ref{thm:1-Parameter} and Theorem~\ref{thm:m=2-square} even though $\a_0\G_0+\a_1\G_1+\a_2\G_2$ is singular for all $\a_0,\a_1$ and $\a_2$.

\subsection{Example 3}
This example presents a $4\times2$ two-parameter pencil. Let
$$
A_0=\left[\begin{array}{cc}
2&4\\6&0\\0&2\\6&0\end{array}\right],\quad
A_1=\left[\begin{array}{cc}
1&0\\0&1\\0&0\\0&0\end{array}\right],\quad
A_2=\left[\begin{array}{cc}
0&0\\2&0\\0&2\\2&0\end{array}\right]\!\cdot
$$
In this example $m>n+1$, then using our approach in Theorem~\ref{thm:1-Parameter-Compressed}, which follows from Theorem~\ref{thm:1-Parameter} (see step 14 of Algorithm~\ref{alg:SolveMEVP2}):
\begin{equation*}
\G_0 = 
    \left[\begin{array}{rrr}
    4&0&0\\
    0&0&4\\
    4&0&0\\
    0&4&0\\
    0&0&4\\
    0&0&0
    \end{array}\right]\!\!,
\quad
\G_1 = 
    \left[\begin{array}{rrr}
    -8&0&-16\\
    0&-16&-8\\
    -8&0&-16\\
    0&0&-16\\
    0&0&0\\
    0&0&16
    \end{array}\right]\!\!,
\quad
\G_2 = 
    \left[\begin{array}{rrr}
    -12&8&4\\
    0&0&-4\\
    -12&0&0\\
    0&-4&0\\
    0&0&-12\\
    0&0&0
    \end{array}\right]\!\!\cdot
\end{equation*}
Solving $(\l_i\G_j - \l_j\G_i)y=0$, $(i,j)\in\{(0,1),(0,2),(1,2)\}$ by reducing the pairs $\{\G_i,\G_j\}$ to the Kronecker Canonical Form, we get the single simultaneous solution:
\begin{equation*}
    \l = \matThrOne{\phantom{-}1}{-2}{-3}\!\!, \quad y = \matThrOne{1}{0}{0}\!\!,\quad z=\left[\begin{array}{c}1\\0\\0\\0\end{array}\right]=\matTwoOne{1}{0} \otimes \matTwoOne{1}{0}\!\!,\quad\Rightarrow x=\left[\begin{array}{c}1\\0\end{array}\right]\!\!\cdot
\end{equation*}

\section{The Multiparameter MPP}\label{sec:Future} In the previous sections, we presented the solution of the two-parameter MPP. By way of outlining our future research directions, we summarize in this section the known results about  
the $r$-parameter MPP for $r\!>\!2$: given $(r\!+\!1)$ matrices $A_0,\ldots,A_r\!\in\!\CC^{m\times n}$, with $m\!\ge\! n\!+\!r\!-\!1$, assume that
$$
\rrank\left(\left[\begin{array}{c}A_0\\\vdots\\A_r\end{array}\right]\right)={n},\qquad\rrank\left(\left[\begin{array}{ccc}A_0&\cdots&A_r\end{array}\right]\right)={m}.
$$
Find all eigenvalues $\l$ and the corresponding eigenvectors $x$ such that \eqref{eqn:Pencil} is satisfied.
Our preliminary work in \cite{Alsubaie2019} gives the following result:
\begin{theorem}
For $i=0,\ldots,r$, define the Kronecker commutator operators
$$
\D_i=\left|\left[\begin{array}{ccccccc}A_0&\cdots&A_{i-1}&\hat{A}_i&A_{i+1}&\cdots&A_r\\
\vdots&\vdots&\vdots&\vdots&\vdots&\vdots&\vdots\\
A_0&\cdots&A_{i-1}&\hat{A}_i&A_{i+1}&\cdots&A_r
\end{array}\right]\right|_{\otimes}\in\CC^{m^r\times n^r},
$$
where $\hat{A}_i$ indicates column deletion and where the matrix defining $\D_i$ has $r$ identical block rows. Then
\begin{enumerate}
\item[(a)] $
\left(\sum\limits_{i=1}^r\l_i A_i\right)\!x\!=\!0,~~0\!\ne\!\left[\!\!\begin{array}{c}\l_1\\\vdots\\\l_r\end{array}\!\!\right]\!\!\in\!\CC^{r},~~0\!\ne\! x\!\in\!\CC^n\Longleftrightarrow \D_0 x^{\otimes r}=0,~~0\ne x\in\CC^{n}$.
\item[(b)] \eqref{eqn:Pencil}$~\Longleftrightarrow (\l_0\D_j-\l_j\D_0)x^{\otimes r}=0,~~0\ne x\in\CC^{n},~~j=1,\ldots,r
$.
\item[(c)] Suppose we index the rows and columns of $\D_i$ as $(i_1,\ldots,i_r)$ and $(j_1,\ldots,j_r)$ with $1\le i_1,\ldots,i_r\le m$ and $1\le j_1,\ldots,j_r\le n$, respectively, and define the Kronecker determinants $\G_i\in\CC^{\binom{m}{r}\times\displaymultiset{n}{r}}$, obtained as compressed versions of $\D_i$ by retaining the $\binom{m}{r}$ rows of $\D_i$ with indices $(i_1,\ldots,i_r)$ with $1\le i_1<\ldots< i_r\le m$ and the $\displaymultiset{n}{r}$ columns of $\D_i$ with indices $(j_1,\ldots,j_r)$ with $1\le j_1\le\ldots\le j_r\le n$. Then 
\be\label{eqn:r-Parameter-Small}
\eqref{eqn:Pencil}\Longleftrightarrow (\l_0\G_j-\l_j\G_0)y=0,~0\ne y\in\CC^{\displaymultiset{n}{r}},~j=1,\ldots,r,
\ee
with $y$ a compressed version (in the above sense) of $x^{\otimes r}$ for some $\x\in\CC^n$ and where $
\binom{n}{k}\!=\!\frac{n!}{k!(n-k)!}$ and 
$\multiset{n}{k}\!=\!\binom{n+k-1}{k}=\frac{(n+k-1)!}{k!(n-1)!}$
denote the number of $k$-combinations of the set $\{1,\ldots,n\}$ without and with replacement, respectively.
\item[(d)] (See also \cite{shapiro2009}) If $m=n+r-1$, then $\binom{m}{r}=\displaymultiset{n}{r}$ and the one-parameter MPPs in \eqref{eqn:r-Parameter-Small} have at least one solution (and generically $\binom{m}{r}=\displaymultiset{n}{r}$ solutions). 
\end{enumerate}
\end{theorem}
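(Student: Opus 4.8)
The plan is to funnel all four parts through one elementary fact about antisymmetrised tensors. First I would record that $\D_i$, being the Kronecker determinant of a matrix with $r$ identical block rows, equals $\sum_{\sigma\in S_r}\sgn(\sigma)\,B_{\sigma(1)}\otimes\cdots\otimes B_{\sigma(r)}$, where $B_1,\dots,B_r$ is the list of its column blocks; applied to the decomposable vector $x^{\otimes r}$ this becomes an alternating product of the images $B_kx$. Writing $\mathrm{Alt}(u_1,\dots,u_r):=\sum_{\sigma\in S_r}\sgn(\sigma)\,u_{\sigma(1)}\otimes\cdots\otimes u_{\sigma(r)}$, we get $\D_0x^{\otimes r}=\mathrm{Alt}(A_1x,\dots,A_rx)$ and $\D_jx^{\otimes r}=\mathrm{Alt}(A_0x,A_1x,\dots,\widehat{A_jx},\dots,A_rx)$ for $1\le j\le r$ (the hat meaning deletion). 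The only lemma I would prove is: $\mathrm{Alt}(u_1,\dots,u_r)=0$ if and only if $u_1,\dots,u_r$ are linearly dependent --- the ``if'' direction by multilinearity (substitute a dependence relation; each resulting term repeats an argument and is annihilated by the transposition of the two equal slots), the ``only if'' direction by applying a dual family $f_1,\dots,f_r$ with $f_i(u_j)=\delta_{ij}$, which returns $1$. This plays for the $r$-parameter problem the role that Theorem~\ref{thm:m=2-square} plays for $r=2$.

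Parts (a) and (b) then follow quickly. For (a): $\D_0x^{\otimes r}=0$ says exactly that $A_1x,\dots,A_rx$ are dependent, i.e. that $(\sum_{i=1}^r\l_iA_i)x=0$ for some $\l\ne0$ --- literally the two sides of the stated equivalence, with the same $x$. For the forward direction of (b), since $w:=(\sum_{i=0}^r\l_iA_i)x=0$ we also have $\mathrm{Alt}(w,A_1x,\dots,\widehat{A_jx},\dots,A_rx)=0$; expanding the first slot by multilinearity, the terms indexed by $i\in\{1,\dots,r\}\setminus\{j\}$ repeat an argument and drop out, leaving a linear relation between $\D_jx^{\otimes r}$ and $\D_0x^{\otimes r}$ that, after reordering the surviving $A_jx$ back into place, is precisely $(\l_0\D_j-\l_j\D_0)x^{\otimes r}=0$. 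For the converse I would split on whether $\D_0x^{\otimes r}=0$: if so, (a) applies (take $\l_0=0$); if not, $A_1x,\dots,A_rx$ are independent and, discarding the trivial case $\l_0=0$, the relation $(\l_0\D_j-\l_j\D_0)x^{\otimes r}=0$ forces a fixed combination $\l_0A_0x\,\pm\,\l_jA_jx$ into $\mathrm{span}\{A_ix:i\ne0,j\}$, hence $A_0x\in\mathrm{span}\{A_1x,\dots,A_rx\}$; matching the unique expansion of $A_0x$ against the $r$ relations pins every coefficient and reproduces \eqref{eqn:Pencil} with the same $x$.

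For part (c) the point is the symmetry of $\D_i$ under a simultaneous permutation of the $r$ tensor slots of its rows and of its columns, which multiplies $\D_i$ by the sign of that permutation --- the $r$-fold version of Property~\ref{itm:P6} and of the commutation-matrix calculus of Section~\ref{sec:Commutator}. Two compressions drop out. On the columns, $x^{\otimes r}$ is symmetric, so only the symmetrised columns act and I would keep one column per weakly increasing multi-index $1\le j_1\le\cdots\le j_r\le n$, of which there are $\multiset{n}{r}$. On the rows, the alternating property kills rows with a repeated index and identifies rows over the same $r$-subset up to sign, so I would keep one row per strictly increasing multi-index $1\le i_1<\cdots<i_r\le m$, of which there are $\binom{m}{r}$; this is the higher analogue of the block $\D_{21}$ of Theorem~\ref{thm:Diagonalisation}, and it yields $\G_i$. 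The equivalences of (b) then descend verbatim to the compressed pencils, with $y$ the corresponding compression of $x^{\otimes r}$. For (d), when $m=n+r-1$ the identity $\binom{n+r-1}{r}=\multiset{n}{r}$ makes the $\G_i$ square, so each pencil $\l_0\G_j-\l_j\G_0$ already has a nontrivial kernel ($\l_0=0$ with $y\in\ker\G_0$ if $\G_0$ is singular, and otherwise the eigenpairs of $\G_0^{-1}\G_j$, of which there are $\multiset{n}{r}$); simultaneity across $j$ and the generic count $\multiset{n}{r}$ would then be obtained exactly as in Theorem~\ref{thm:Commutativity}, by showing that some linear combination $\G=\sum\a_i\G_i$ is generically invertible, the $\G^{-1}\G_i$ commute by the same dimension count, and their common eigenvectors are the solutions.

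The step I expect to be the real obstacle sits inside (c): the claim that a kernel vector $y$ of the compressed pencil always lifts to a strongly decomposable $x^{\otimes r}$ (equivalently, that the kernel of $\D_0$, or of the linked pencils, contains such a vector). For $r=2$ this is Theorem~\ref{thm:m=2-square} together with Theorem~\ref{thm:Diagonalisation}(2), and its proof there relies on the rank factorisation $Z=UV^T$ and the sequential row-compression argument of Section~\ref{sec:3}, which does not transparently survive the passage to more than two tensor factors; in \cite{Alsubaie2019} a regularity hypothesis is imposed precisely to secure it. So I would expect a clean general-$r$ argument either to carry such a hypothesis or to require a genuinely new decomposability result --- which is why the present paper confines itself to, and fully settles, the case $r=2$.
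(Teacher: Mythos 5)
The first thing to say is that the paper does not actually prove this theorem: it is stated as a summary of the authors' prior work \cite{Alsubaie2019} and of \cite{shapiro2009}, with the proof deferred, so there is no in-paper argument to match yours against line by line. Judged on its own terms, your reconstruction is essentially sound for the theorem \emph{as literally stated} (i.e.\ with the strongly decomposable vector $x^{\otimes r}$ and its compression $y$), and it takes a genuinely different route from the $r=2$ machinery in Sections~\ref{sec:3}--\ref{sec:Deflation}. Your single lemma --- $\mathrm{Alt}(u_1,\ldots,u_r)=0$ iff the $u_i$ are dependent --- is the clean multilinear statement underlying everything, and for decomposable arguments it gives (a) and the forward half of (b) in a few lines, where the paper's Theorem~\ref{thm:m=2-square} spends most of its effort on the rank factorisation $Z=UV^T$ and the sequential row compressions. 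That extra effort is not wasted in the paper: it exists precisely to handle \emph{non-decomposable} kernel vectors $z$, which is what lets Theorem~\ref{thm:1-Parameter} capture \emph{all and only} the solutions without a regularity hypothesis (Remark~\ref{rem:3.2significant}). Your approach buys brevity on the decomposable locus; the paper's buys the stronger statement for $r=2$. Your compression argument for (c) is the correct higher analogue of Theorem~\ref{thm:Diagonalisation} (symmetric columns, alternating rows), and your diagnosis of the real obstacle --- lifting a kernel vector of the compressed pencil back to some $x^{\otimes r}$ --- coincides exactly with what the paper itself flags at the end of Section~\ref{sec:Future} as requiring further research for $r>2$.

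Two points deserve tightening. First, in part (b) the identity you want is sensitive to the cofactor signs built into the definition of $\D_i$ (note that for $r=2$ the paper's $\D_1=A_2\otimes A_0-A_0\otimes A_2$ carries the sign $(-1)^1$ relative to naive column deletion); moving $A_jx$ from the first slot back to the $j$-th costs $(-1)^{j-1}$, and without tracking this your final relation reads $\l_0A_0x-\sum_j(-1)^{j-1}\l_jA_jx=0$ rather than \eqref{eqn:Pencil}. The argument survives, but only once the signs in the $\D_i$ and in the reordering are matched. Second, part (d) is thinner than you present it: that each square pencil individually has a nontrivial kernel is immediate, but the claim is about \emph{simultaneous} solutions, and the route you sketch (some $\G=\sum\a_i\G_i$ invertible, the $\G^{-1}\G_i$ commute by a dimension count as in Theorem~\ref{thm:Commutativity}) requires redoing that count for general $r$ and, more importantly, routes through the very decomposability statement you have already identified as missing when passing from ``one pencil has a kernel vector'' to ``\eqref{eqn:Pencil} holds''. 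So (d) is the one part where your proposal, like the paper's own discussion, rests on either a genericity hypothesis or an unproved lifting result rather than on a complete argument.
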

The above result provides a solution for the $r$-parameter MPP in the generic case, which here can be defined as $m=n+r-1$ and $\G:=\sum_{i=0}^r\a_i\G_i$ nonsingular for some $\a_0,\ldots,\a_r\in\CC$. However, a complete solution in the case that $m>n+r-1$ or $m=n+r-1$ but $\G:=\sum_{i=0}^r\a_i\G_i$ is singular for all $\a_0,\ldots,\a_r\in\CC$ requires further research for the proof of the above results with the strongly decomposable vector $x^{\otimes r}$ replaced by a general $z\in\CC^{n^r}$ and the compressed vector $y$ replaced by a general vector. This would allow us to extend Algorithm~\ref{alg:SolveMEVP2} to the case $r>2$.

\section{Conclusions}\label{sec:Conclusions}

In this paper, we have considered the multiparameter matrix pencil problem (MPP) and highlighted its relation to earlier problems. Interest in the problem is relatively new and to the authors' knowledge little work has been done for its solution. We were led to it because of its links to the optimal $\mathcal{H}_2$ model reduction problem \cite{Serkan2008,beattie2017,Ahmad2010,Ahmad2011},  established in the general case in \cite{Alsubaie2019}. Due to its inherent importance and as a first step towards extending the solution approach to the general multiparameter MPP, we presented a full solution of the two-parameter MPP. Firstly, an inflation process was implemented in Theorem~\ref{thm:1-Parameter} to prove (using Theorem~\ref{thm:m=2-square}) that the two-parameter MPP is equivalent to three $m^2\times n^2$ simultaneous one-parameter MPPs. The inflated MPPs are given in terms of Kronecker commutator operators involving the original matrices which exhibit several symmetries. These symmetries were thoroughly investigated in Section~\ref{sec:Commutator} as they can also be interesting in other problems than the MPP. The results were then exploited in Theorem~\ref{thm:1-Parameter-Compressed} to deflate the dimensions of the one-parameter MPPs to $\frac{m(m-1)}{2}\times\frac{n(n+1)}{2}$ using Theorem~\ref{thm:Diagonalisation}, thus simplifying their numerical solution. In the case that $m=n+1$ (that is, when the number of equations is equal to the number of unknowns), Theorem~\ref{thm:Commutativity} showed that a solution to the two-parameter MPP always exists and established a commutativity property of the Kronecker determinants under a rank assumption. This was used to decouple the three one-parameter MPPs into three simultaneous eigenvalue problems thus simplifying their solution, which led to Algorithm~\ref{alg:SolveMEVP2}. Numerical examples were then given to highlight the procedure of the proposed solution algorithm. Finally, future research directions to extend our approach to the general multiparameter MPP were outlined.
\section*{Acknowledgement} We thank the reviewers for their substantial time and effort which greatly helped us to improve the readability and quality of the paper.
\bibliographystyle{siamplain}
\bibliography{references}

\end{document}